\title{Pseudocompact C$^*$-algebras}
\author{Stephen Hardy}
\address{P.O. Box 142, Hampden-Sydney College, Hampden-Sydney, VA 23943}
\email{srhardy@hsc.edu}
\newcommand{\A}{\mathfrak{A}}
\renewcommand{\H}{\mathcal{H}}
\newcommand{\B}{\mathcal{B}}
\newcommand{\K}{\mathcal{K}}
\newcommand{\U}{\mathcal{U}}
\newcommand{\Z}{\mathcal{Z}}
\DeclareMathOperator{\tr}{tr}
\renewcommand{\u}{\mathcal{U}}
\DeclareMathOperator{\V}{\mathcal{V}}
\DeclareMathOperator{\dist}{dist}
\DeclareMathOperator{\conv}{conv}
\DeclareMathOperator{\diag}{diag}
\theoremstyle{definition}
\newtheorem*{thm*}{Theorem}
\newtheorem{thm}{Theorem}[section]
\newtheorem*{prop*}{Proposition}
\newtheorem{prop}[thm]{Proposition}
\newtheorem*{lemma*}{Lemma}
\newtheorem{lemma}[thm]{Lemma}
\newtheorem*{cor*}{Corollary}
\newtheorem{cor}[thm]{Corollary}
\newtheorem*{defn*}{Definition}
\newtheorem*{question*}{Question}
\newtheorem*{conj*}{Conjecture}
\newtheorem{conj}[thm]{Conjecture}
\newtheorem*{ex*}{Example}
\begin{document}

\begin{abstract}
We study the class of pseudocompact C$^*$-algebras, which are the logical limits of finite-dimensional C$^*$-algebras.  The pseudocompact C$^*$-algebras are unital, stably finite, real rank zero, stable rank one, and tracial.  We show that the pseudocompact C$^*$-algebras have trivial $ K_1 $ groups and the Dixmier property.  The class is stable under direct sums, tensoring by finite-dimensional C$^*$-algebras, taking corners, and taking centers.  We give an explicit axiomatization of the commutative pseudocompact C$^*$-algebras.  We also study the subclass of pseudomatricial C$^*$-algebras, which have unique tracial states, strict comparison of projections, and trivial centers.   We give some information about the $ K_0 $ groups of the pseudomatricial C$^*$-algebras.
\end{abstract}

\maketitle

\begin{section}{Introduction}

Finite-dimensional C$^*$-algebras are $ * $-isomorphic to finite direct sums of matrix algebras over the complex numbers.  Since linear algebra makes matrices tractable, it is not surprising that some of the earliest classes of C$^*$-algebras which were studied and classified were limits of finite-dimensional C$^*$-algebras.  For instance, we understand the compact operators on a Hilbert space, the uniformly hyperfinite or UHF algebras studied by Glimm \cite{MR0112057} which are classified by their supernatural number, and the approximately finite-dimensional or AF algebras studied by Bratteli \cite{MR0312282}, which are classified by their ordered $ K_0 $ group with order unit by Elliott's famous result \cite{MR0397420}. \par

Inspired by \cite{MR0229613}, we study a different type of limit of finite-dimensional C$^*$-algebras.  Namely, we study the logical limits of finite-dimensional C$^*$-algebras, which are called the pseudocompact C$^*$-algebras.  We also study the logical limits of matrix algebras, which we call the pseudomatricial C$^*$-algebras. \par

In Section 2 we give a crash course in continuous logic for the reader's convenience.  In Section 3 we recall several equivalent definitions of pseudocompact and pseudomatricial C$^*$-algebras due to Goldbring and Lopes.  We observe that the pseudocompact C$^*$-algebras are stable under direct sums, and both pseudocompact and pseudomatricial algebras are stable under tensoring by $ M_n $ for any $ n \in \mathbb{N} $, but in general pseudocompact C$^*$-algebras are not closed under taking subalgebras.  We recall the classification of the commutative pseudocompact C$^*$-algebras due to Henson and Moore.  In Section 4, we use axiomatizable properties from \cite{2016arXiv160208072F} to observe that the pseudocompact C$^*$-algebras are unital, stably finite, real rank zero, stable rank one, and tracial.  We also axiomatize the class of commutative pseudocompact C$^*$-algebras.  In Section 5 we explore the unitaries, projections, and centers of pseudocompact C$^*$-algebras.  We show that in pseudocompact C$^*$-algebras every unitary is homotopy equivalent to the identity, so these algebras have trivial $ K_1 $ groups.  We observe that if $ p $ is a projection in a pseudocompact C$^*$-algebra $ \A $ then $ p \A p $ is a pseudocompact C$^*$-algebra.  We show that every non-zero projection in a pseudocompact C$^*$-algebra dominates a minimal projection, and a non-zero projection $ p $ is minimal if and only if $ p \A p = \mathbb{C} p $.  Modulo some minimal projections, the identity can be written as a sum of $ d $ orthogonal Murray-von Neumann equivalent projections for any positive integer $ d $.  Pseudocompact C$^*$-algebras enjoy the Dixmier property, which allows us to observe that the center of a pseudocompact C$^*$-algebra is pseudocompact.  In Section 6 we focus on the subclass of pseudomatricial C$^*$-algebras.  We show that a pseudocompact C$^*$-algebra is pseudomatricial if and only if it has trivial center.  We show that pseudomatricial C$^*$-algebras have a unique tracial state, strict comparison of projections, and a totally ordered $ K_0 $ group with successors and predecessors.  The unique trace on an infinite-dimensional pseudomatricial C$^*$-algebra is not faithful, so such C$^*$-algebras are not simple.  We give some results on the structure of the $ K_0 $ group of a pseudomatrical C$^*$-algebra.  We conclude with some open questions about pseudocompact and pseudomatricial C$^*$-algebras. \par

We assume some familiarity with ultraproducts, especially the modified construction for Banach spaces.  For general information about ultraproducts of Banach spaces see \cite{MR552464}.  For ultraproducts of C$^*$-algebras, see \cite{MR1902808}.  Unless otherwise noted, the ultraproducts below are of the C$^*$-algebraic flavor. \par

The author would like to thank David Sherman and Isaac Goldbring for their support and helpful comments.

\end{section}

\begin{section}{Continuous Logic}

Unfortunately, classical true/false valued first-order logic does not work the way we would like for metric spaces like C$^*$-algebras.  This is because we have have elements which come close to satisfying an equation, without being able to exactly satisfy that equation.  For instance, in the rationals, $ x^2 - 2 $ has no root, but it has approximate roots of arbitrary precision, which is very different than the situation in the integers.  Thus $ x^2 - 2 $ has a root in a non-principal ultrapower of the rationals, but not in an ultrapower of the integers.  This makes the analogue of \L os' theorem (see below) fail.  \par

Fortunately, a continuous first-order logic and model theory for metric structures was introduced in \cite{MR2436146}, and specialized to C$^*$-algebras in \cite{MR3265292} and \cite{MR3210717}.  Another great reference is \cite{2016arXiv160208072F}.  \par

The language or symbols that can be used are the norm, addition, multiplication, multiplication by any complex scalar, and the adjoint operation.  Beside variables, we include the constant symbol for the zero element, and in unital C$^*$-algebras the identity $ I $.  Atomic formulae are norms of $ * $-polynomials in several variables.  One can combine (finitely many) formulae via continuous functions (our connectives), and can quantify by taking suprema and infima over the closed unit ball.  A formula with no free variables is called a sentence.  For a sentence $ \varphi $ and a C$^*$-algebra $ \A $, we let $ \varphi^\A $ be its evaluation in $ \A $, which is a real number. \par

Generally suprema are viewed as universal quantifiers and infima are viewed as existential quantifiers.  A sentence which evaluates to zero in a C$^*$-algebra is thought of as true in that algebra.  In general $ \inf_x \varphi( x ) = 0 $ does not mean this infimum is actually achieved, of course.  Note that $ | \varphi | + | \psi | $ evaluates to zero if and only if both $ \varphi $ and $ \psi $ evaluate to zero, so this operation acts like ``and''.  Similarly $ \varphi \cdot \psi $ evaluates to zero if and only if $ \varphi $ or $ \psi $ evaluates to zero, so this operation acts like ``or''.  \par

For example, in classical first-order logic one might express the fact that an algebra has a multiplicative unit with the sentence
$$
\exists e \quad \forall x \quad ( e x - x = 0 ) \wedge ( x e - x = 0 ) .
$$
Similarly one might express that an algebra is commutative with the sentence
$$
\forall x, y \quad x y - y x = 0 .
$$
The analogous sentences for C$^*$-algebras are
$$
\varphi_u = \inf_{ || e || \le 1 } \sup_{ || x || \le 1 } || e x - x || + || x e - x || 
$$
and
$$
\varphi_c = \sup_{ || x ||, || y || \le 1 } || x y - y x || .
$$
It is clear that if a C$^*$-algebra is unital then the first sentence evaluates to zero, and a C$^*$-algebra is commutative if and only if the second sentence evaluates to zero.  It is less clear that if $ \varphi_u = 0 $ then the C$^*$-algebra has an honest-to-goodness identity element instead of a sequence of elements which approximately behave like one.  Likewise, it is not immediately clear how small the evaluations of these sentences can be on non-unital and non-commutative C$^*$-algebras. \par

Similar to classical first-order logic, continuous logic does not allow us to quantify over arbitrary subsets.  However we can quantify over so-called definable sets.  See Section 9 in \cite{MR2436146} and \textsection 3.2 in \cite{2016arXiv160208072F} for an in-depth discussion.  It is well known (e.g. Lemma 3.2.4 in \cite{2016arXiv160208072F}) that the zero sets of weakly-stable formulae (see \cite{MR1420863}) are definable sets, and we can quantify over those sets.  In particular, results like Proposition 2.1 in \cite{MR1902808} imply the self-adjoint elements, the positive elements, the projections, the isometries and partial isometries, and the unitary elements form definable sets in any C$^*$-algebra.  Conversely, the normal, invertible, and central elements are not always definable sets.  By \textsection 2.3 in \cite{2016arXiv160208072F} we can quantify over certain compact subsets of the complex numbers.  For instance, if $ \psi $ is a formula with one free variable,
$$
\sup_{ 0 \le \lambda \le 1 } \psi( \lambda ) = \sup_{ || x || \le 1 } \psi( || x || ) .
$$

Two C$^*$-algebras $ \A $ and $ \mathfrak{B} $ are elementarily equivalent, written $ \A \equiv \mathfrak{B} $, if for every sentence $ \varphi $, $ \varphi^\A = \varphi^\mathfrak{B} $.  In other words, elementarily equivalent C$^*$-algebras are indistinguishable by continuous logic.  This is in general a coarser relation than $ * $-isomorphism.  See e.g. Theorem 3 in \cite{MR3151403}.  Two famous theorems show the close connection between elementary equivalence and ultrapowers.  The first is \L os' Theorem, which states that $ \A $ is elementarily equivalent to all of its ultrapowers: $ \A \equiv \A^\U $ for all ultrafilters $ \U $.  See Theorem 5.4 in \cite{MR2436146} and Proposition 4.3 in \cite{MR3265292}.  Conversely, we have the Keisler-Shelah Theorem, which states that two C$^*$-algebras are elementarily equivalent if and only if they have isomorphic ultrapowers (with respect to the same ultrafilter, even).  That is, $ \A \equiv\mathfrak{B} $ if and only if there is some ultrafilter $ \U $ so that $ \A^\U \cong \mathfrak{B}^\U $.  See Theorem 5.7 in \cite{MR2436146}. \par

\end{section}

\begin{section}{Definition of Pseudocompactness}

The term pseudofinite harkens back to \cite{MR0229613}.  An infinite field is pseudofinite if it satisfies every sentence which is satisfied in every finite field.   The analogous property in continuous logic is called pseudocompactness and was introduced in \cite{MR3373616}.  Here is a restatement of Lemma 2.4 in that paper:

The following are equivalent for a C$^*$-algebra $ \A $:
\begin{enumerate}

\item{}
Let $ \varphi $ be a sentence in the language of C$^*$-algebras.  If $ \varphi^\mathcal{F} = 0 $ for all finite-dimensional C$^*$-algebras $ \mathcal{F} $, then $ \varphi^\A = 0 $.

\item{}
Let $ \psi $ be a sentence in the language of C$^*$-algebras.   If $ \psi^\A = 0 $, then for all $ \varepsilon > 0 $ there is a finite-dimensional C$^*$-algebra $ \mathcal{F} $ so that $ | \psi^\mathcal{F} | < \varepsilon $.

\item{}
$ \A $ is elementarily equivalent to an ultraproduct of finite-dimensional C$^*$-algebras.

\end{enumerate}

\begin{defn*}
We say that $ \A $ is a \emph{pseudocompact} C$^*$-algebra if it satisfies any of the above conditions.  If one replaces ``finite-dimensional C$^*$-algebra'' with ``matrix algebra'' throughout the above, we say that $ \A $ is a \emph{pseudomatricial} C$^*$-algebra.  We do not require $ \A $ to be infinite-dimensional.
\end{defn*}

Pseudocompact tracial von Neumann algebras were studied in Section 5 of \cite{MR3210717}. \par

Notice that the first condition and \L os' theorem imply that an ultraproduct of pseudocompact C$^*$-algebras is pseudocompact.  Straightforward calculations show that taking direct sums and tensoring with matrix algebras commute with ultraproducts, so by appealing to the Keisler-Shelah theorem we obtain the following:

\begin{prop}
Pseudocompact C$^*$-algebras are closed under direct sums and tensoring with matrix algebras.  Similarly, pseudomatricial C$^*$-algebras are stable under tensoring with matrix algebras.
\end{prop}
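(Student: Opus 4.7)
The plan follows the hint in the paragraph preceding the proposition. I combine three ingredients: the ultraproduct characterization of pseudocompactness (condition (3)); the fact that ultraproducts commute with direct sums and with tensoring by $M_n$; and Keisler-Shelah to pass between an algebra and an ultrapower representation of an ultraproduct of finite-dimensional algebras.

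First I would record the two commutation isomorphisms. For any ultrafilter $\U$ and C$^*$-algebras $\A,\mathfrak{B}$, the map $[(a_i,b_i)] \mapsto ([a_i],[b_i])$ is an isometric $*$-isomorphism $(\A \oplus \mathfrak{B})^\U \to \A^\U \oplus \mathfrak{B}^\U$, since the direct-sum C$^*$-norm is the maximum of the component norms. Similarly, viewing $\A \otimes M_n$ as $M_n(\A)$, the entrywise splitting map yields $(\A \otimes M_n)^\U \cong \A^\U \otimes M_n$; the finite-dimensionality of $M_n$ is essential so that a norm-bounded sequence of matrices separates into $n^2$ norm-bounded entry sequences. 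These are the ``straightforward calculations'' the author flags, and the same arguments apply to general ultraproducts (not only ultrapowers).

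Second, I would combine the commutation isomorphisms with Keisler-Shelah to transfer elementary equivalence: if $\A \equiv \A'$ and $\mathfrak{B} \equiv \mathfrak{B}'$, then $\A \oplus \mathfrak{B} \equiv \A' \oplus \mathfrak{B}'$, and similarly $\A \otimes M_n \equiv \A' \otimes M_n$. Pick a single ultrafilter $\U$ witnessing both equivalences (any sufficiently saturated one works), so that $\A^\U \cong (\A')^\U$ and $\mathfrak{B}^\U \cong (\mathfrak{B}')^\U$. Chaining with the commutation isomorphisms gives
$$(\A \oplus \mathfrak{B})^\U \cong \A^\U \oplus \mathfrak{B}^\U \cong (\A')^\U \oplus (\mathfrak{B}')^\U \cong (\A' \oplus \mathfrak{B}')^\U,$$
and \L os' theorem delivers the desired elementary equivalence of the original algebras. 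The tensoring case is identical.

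To finish: given pseudocompact $\A,\mathfrak{B}$, condition (3) provides $\A \equiv \prod_{\U_1} \mathcal{F}_i$ and $\mathfrak{B} \equiv \prod_{\U_2} \mathcal{G}_j$ with each $\mathcal{F}_i,\mathcal{G}_j$ finite-dimensional. After a bookkeeping step to put both representations over a common index set and ultrafilter---e.g.\ pad each family constantly to $I_1 \times I_2$ and use the product ultrafilter $\U_1 \times \U_2$, which only alters each ultraproduct up to elementary equivalence by \L os---the transfer step gives $\A \oplus \mathfrak{B} \equiv \prod_\U (\mathcal{F}_i \oplus \mathcal{G}_i)$, an ultraproduct of finite-dimensional algebras, so $\A \oplus \mathfrak{B}$ is pseudocompact by (3). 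The tensor case needs no alignment and uses that each $\mathcal{F}_i \otimes M_n$ is finite-dimensional. The pseudomatricial version is identical with matrix algebras $M_{k_i}$ replacing $\mathcal{F}_i$, using $M_{k_i} \otimes M_n \cong M_{nk_i}$; this approach deliberately fails for direct sums in the pseudomatricial setting because $M_k \oplus M_\ell$ is not itself a matrix algebra when $k \neq \ell$, which is why the proposition omits that closure. The only delicate point in the proof is the ultrafilter alignment, and it is routine given the standard machinery of iterated ultraproducts.
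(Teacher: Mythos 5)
Your proof is correct and follows exactly the route the paper sketches: the commutation of ultraproducts with direct sums and with tensoring by $M_n$, combined with Keisler--Shelah and \L os' theorem to transfer elementary equivalence. The only hand-wave is choosing a single ultrafilter witnessing both Keisler--Shelah equivalences at once; this is fine (and can be avoided entirely by chaining, e.g.\ $\A \oplus \mathfrak{B} \equiv \A' \oplus \mathfrak{B} \equiv \A' \oplus \mathfrak{B}'$ one factor at a time), so there is no genuine gap.
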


We are specifically interested in the separable infinite-dimensional pseudocompact C$^*$-algebras.  Although ultraproducts are generally finite-dimensional or non-separable (if the ultrafilter is countably incomplete, see Theorem 5.1 and section 6 in \cite{MR1902808}), the downward  L\"{o}wenheim-Skolem theorem (Theorem 4.6 in \cite{MR3265292} and Proposition 7.3 in \cite{MR2436146}) allows us to take a separable elementarily equivalent subalgebra (in fact, an elementary subalgebra, see Definition 4.3 in \cite{MR2436146}), which will be an infinite-dimensional pseudocompact C$^*$-algebra.  \par

Pseudocompact real Banach spaces were studied by Henson and Moore, although under different terminology.  They showed that for a compact Hausdorff space $ K $, $ \mathcal{C}( K ) $ is pseudocompact (as a real Banach space) if and only if $ K $ is totally disconnected and has a dense subset of isolated points.  See Section 4 in \cite{MR0461104}, Theorem 4.1 in \cite{MR552464}, and \cite{MR620577}.  This result still holds for C$^*$-algebras.  Many of the relevant arguments can be found in \cite{MR3390013}. \par

For example, let $ \mathfrak{C} $ denote the usual Cantor set.  Since the Cantor set does not have a dense set of isolated points, $ \mathcal{C}( \mathfrak{C} ) $ is not pseudocompact.  In particular we can observe that not all commutative AF algebras (i.e. those $ \mathcal{C}( X ) $ with $ X $ totally disconnected and metrizable) are pseudocompact.  However, the space of convergent sequences of complex numbers, which is the space of continuous functions on the one-point compactification of the natural numbers is pseudocompact.  Likewise, $ \ell^\infty $, which is the space of continuous functions on the Stone-\v{C}ech compactification of the natural numbers, is also pseudocompact. \par

Since subalgebras of finite-dimensional algebras are finite-dimensional, one might expect subalgebras of pseudocompact algebras to be pseudocompact.  However, this is not the case. Let
$$
S_1 = \{ 0, 1 \}, \qquad S_2 = \{ 0,\tfrac{1}{3}, \tfrac{2}{3}, 1 \}, \qquad S_3 = \{ 0, \tfrac{1}{9}, \tfrac{2}{9}, \tfrac{ 1 }{ 3 }, \tfrac{ 2 }{ 3 } , \tfrac{7}{9}, \tfrac{8}{9}, 1 \} ,  \qquad \ldots .
$$
In general, we let $ S_n = \{ \tfrac{ k }{ 3^{n-1} } \, | \, k \in \mathbb{N} \} \cap \mathfrak{C} $ be the set of endpoints of the $ n^{ \text{th} } $ step of the usual middle-third construction of $ \mathfrak{C}$.  Define $ X_0 $ to be the space 
$$
X_0 = \{ \, ( 0, x ) \, | \, x \in \mathfrak{C} \} \cup \{ \, ( \tfrac{1}{n}, x ) \, | \, n \in \mathbb{N}, x \in S_n \} 
$$
with the subspace topology from $ \mathbb{R}^2 $.  This is compact, Hausdorff, totally disconnected, and has a dense subset of isolated points.  This will quotient onto every compact metric space by quotienting onto the Cantor set then following with the surjection that exists by the Hausdorff-Alexandroff theorem.  In particular, every $ \mathcal{C}( X ) $ space with $ X $ compact metric will be a subalgebra of the pseudocompact C$^*$-algebra $ \mathcal{C}( X_0 ) $.  Rudin showed that there is an interesting subclass of commutative pseudocompact C$^*$-algebras which are closed under subalgebras, see \cite{MR0085475}. \par

\end{section}

\begin{section}{Axiomatizable Properties}

A closed condition is of the form $ \varphi \le r $ for a sentence $ \varphi $ and $ r \in \mathbb{R} $.  A class $\mathcal{C} $ of C$^*$-algebras is axiomatizable if there is a collection of closed conditions $ \Sigma $ so that $ \mathcal{C} $ is the class of C$^*$-algebras satisfying those conditions.  We say that $ \Sigma $ is a set of axioms for that class.  We say that a property is axiomatizable if the class of C$^*$-algebras which enjoy that property is axiomatizable.  By Proposition 5.14 in \cite{MR2436146}, a class of C$^*$-algebras is axiomatizable if and only if it is closed under $*$-isomorphism, ultraproducts, and ultraroots (that is, for a C$^*$-algebra $ \A$ and an ultrafilter $ \u $, if $ \A^\u $ has the property, then $ \A $ has the property).  \par

Using this, it is easy to see that the pseudocompact C$^*$-algebras and the pseudomatricial C$^*$-algebras are the smallest axiomatizable classes containing the finite-dimensional C$^*$-algebras and matrix algebras respectively. \par

A paradigm to find properties of pseudocompact C$^*$-algebras is to identify axiomatizable properties of finite-dimensional C$^*$-algebras. \cite{2016arXiv160208072F} provides a useful catalogue of axiomatizable properties.  \par

The class of commutative C$^*$-algebras is axiomatized by the condition
$$
\varphi_c = \sup_{ || x ||, \, || y || \le 1 } || x y - y x || = 0 .
$$
In fact one can show that the above sentence evaluates to 2 in a non-commutative C$^*$-algebra.  
\begin{lemma}
Suppose $ \A $ is a non-commutative C$^*$-algebra.  Then for all $ \varepsilon > 0 $, there are elements $ a $, $ b $ of $ \A $ so that $ || a || $, $ || b || \le 1 + \varepsilon $ and $ || \, [ a, b ] \, || \ge 2 $.
\end{lemma}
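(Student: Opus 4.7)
The plan is to reduce to a two-dimensional irreducible representation of $\A$ and lift the Pauli matrices via Kadison's transitivity theorem.

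First, since $\A$ is non-commutative, it must admit an irreducible representation $\pi \colon \A \to \B(\H)$ with $\dim \H \ge 2$: a C*-algebra all of whose irreducible representations are one-dimensional is commutative, because its universal representation is then a direct sum of characters. Fix such a $\pi$ and choose orthonormal vectors $e_1, e_2 \in \H$; let $V = \text{span}(e_1, e_2)$. Let $T_x, T_z \in \B(\H)$ act on $V$ as the Pauli matrices $\sigma_x$ and $\sigma_z$ respectively and vanish on $V^\perp$, so that $T_x, T_z$ are self-adjoint of norm one.

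Next, by Kadison's transitivity theorem in its self-adjoint, norm-controlled form, there exist self-adjoint elements $a, b \in \A$ with $\|a\|, \|b\| \le 1$ satisfying $\pi(a)|_V = T_x|_V$ and $\pi(b)|_V = T_z|_V$; that is, $\pi(a) e_1 = e_2$, $\pi(a) e_2 = e_1$, $\pi(b) e_1 = e_1$, and $\pi(b) e_2 = -e_2$. A direct calculation then yields
\[ \pi([a,b])e_1 = \pi(a)\pi(b)e_1 - \pi(b)\pi(a)e_1 = \pi(a)e_1 - \pi(b)e_2 = e_2 - (-e_2) = 2 e_2, \]
so $\|[a,b]\| \ge \|\pi([a,b])\| \ge \|2 e_2\| = 2$. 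Since $\|a\|, \|b\| \le 1 \le 1 + \varepsilon$, this proves the lemma (in fact with $\varepsilon = 0$).

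The main obstacle I foresee is invoking Kadison's transitivity theorem in the precise form needed (self-adjoint lift with norm control on a finite-dimensional subspace); this is a standard result (see, e.g., Pedersen's \emph{C*-Algebras and their Automorphism Groups}, Theorem 2.7.5). The commutator computation itself is just the familiar identity $[\sigma_x, \sigma_z] = -2i\sigma_y$ in $M_2(\mathbb{C})$, whose operator norm is exactly $2$ --- the largest possible value of $\|[a,b]\|$ when $\|a\|, \|b\| \le 1$.
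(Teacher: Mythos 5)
Your proof is correct and follows essentially the same route as the paper's: pass to an irreducible representation on a Hilbert space of dimension at least two, prescribe two norm-one operators on a two-dimensional subspace whose commutator acts as multiplication by $2$ on a unit vector there, and lift them by Kadison transitivity. The only difference is cosmetic --- you take the self-adjoint Pauli matrices $\sigma_x, \sigma_z$ and invoke the self-adjoint, norm-controlled form of transitivity, which even sharpens the conclusion to $\| a \|, \| b \| \le 1$, whereas the paper uses one non-self-adjoint operator and settles for the bound $1 + \varepsilon$.
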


\begin{proof}
Since $ \A $ is non-commutative, not every irreducible representation of $ \A $ acts on a one-dimensional Hilbert space.  Let $ \pi : \A \rightarrow \B( \H ) $ be an irreducible representation of $ \A $ where $ \H $ is at least two-dimensional.  Let $ \xi $, $ \eta $ be orthogonal unit vectors in $ \H $, and let $ \K = \text{span} ( \xi, \eta ) $.  Then there are operators $ t_1 $, $ t_2 \in \B( \H ) $ so that
$$
t_1 \xi = - \eta \qquad t_1 \eta = \xi \qquad t_2 \xi = \eta \qquad t_2 \eta = \xi \qquad || t_1 || = || t_2 || = 1 .
$$
Then by the Kadison Transitivity Theorem, there are $ a $, $ b \in \A $ such that 
$$
a|_\K = t_1 |_\K, \qquad b|_\K = t_2|_\K \qquad || a || , \, || b || \le 1 + \varepsilon .
$$
Then since
$$
[ a, b ] \xi = ( a b - b a ) \xi = ( t_1 t_2 - t_2 t_1 ) \xi = t_1 t_2 \xi - t_2 t_1 \xi = t_1 \eta + t_2 \eta = 2 \xi .
$$
We have that $ || \, [ a, b ] \, || \ge 2 $.
\end{proof}

This means that the class of non-commutative C$^*$-algebras is axiomatizable.  For an alternative sentence and proof, see \textsection 2.5(a) in \cite{2016arXiv160208072F}. \par

The class of unital C$^*$-algebras is axiomatized by the condition
$$
\varphi_u = \inf_{ || e || \le 1 } \sup_{ || x || \le 1 } || e x - x || = 0 .
$$
One can show the above sentence evaluates to at least 1 in a non-unital C$^*$-algebra.

Recall that a C$^*$-algebra $ \A $ is real rank zero if the self-adjoint elements with finite spectra are dense in the self-adjoint elements of $ \A $, see \cite{MR1120918}.  Having real rank zero is axiomatizable by a sentence $ \varphi_{ \text{ rr0} } $, see Example 2.4.2 and \textsection 3.6(b) in \cite{2016arXiv160208072F}.  This and the Henson-Moore classification allow us to axiomatize the commutative pseudocompact C$^*$-algebras. \par

\begin{prop}
$ \A $ is a commutative pseudocompact C$^*$-algebra if and only if it is commutative, unital, real rank zero, and every element can be approximately normed by a minimal projection $ p $ with $ p \A p = \mathbb{C} p $.  The following form a set of axioms for the class of commutative pseudocompact C$^*$-algebras: \par

\begin{enumerate}

\item{}
$ \varphi_c^\A = \sup_{ || x ||, \, || y || \le 1 } || x y - y x ||  = 0 $.

\item{}
$ \varphi_u^\A = \inf_{ || e || \le 1 } \sup_{ || x || \le 1 } || e x - x || = 0 $.

\item{}
$ \varphi_{ \text{ rr0} }^\A = 0 $.

\item{}
$ \displaystyle \sup_{ || x || \le 1 } \inf_{ p \text{ a proj. } } \sup_{ || y || \le 1 } \inf_{ | \lambda | \le 1 } || p y p - \lambda p ||+ \big| \, || x || - || x p || \, \big| = 0 $.

\end{enumerate}

\end{prop}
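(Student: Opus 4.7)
The plan is to argue both directions of the biconditional using the Gelfand correspondence and the Henson--Moore classification recalled in the previous section: $ \mathcal{C}(X) $ is pseudocompact iff $ X $ is compact Hausdorff, totally disconnected, and has a dense set of isolated points. Alongside this, I will check that (1)--(4) are genuine continuous-logic sentences and that each expresses one of the four listed properties.

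For the forward direction, let $ \A $ be a commutative pseudocompact C$^*$-algebra. Commutativity gives (1); since the sentences $ \varphi_u $ and $ \varphi_{\text{rr0}} $ vanish on every finite-dimensional C$^*$-algebra, pseudocompactness yields (2) and (3). For (4) it suffices to check that the inner sentence vanishes on every finite-dimensional C$^*$-algebra $ \mathcal{F} = \bigoplus_i M_{n_i} $: given $ x \in \mathcal{F} $ of norm $ \le 1 $, pick a block where the norm is attained, let $ \xi $ be a right singular unit vector for $ x $ on that block, and put $ p = \xi \xi^* $. Then $ p \mathcal{F} p = \mathbb{C} p $, so for any $ \| y \| \le 1 $ we have $ p y p = \mu p $ for some $ | \mu | \le 1 $, and taking $ \lambda = \mu $ gives $ \inf_\lambda \| p y p - \lambda p \| = 0 $; moreover $ \| x p \| = \| x \xi \| = \| x \| $. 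Pseudocompactness then transfers (4) to $ \A $.

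For the backward direction, assume (1)--(4) hold. By (1) and (2), $ \A \cong \mathcal{C}(X) $ for some compact Hausdorff $ X $. By (3) and the standard characterization of real rank zero in the commutative case, $ X $ is totally disconnected. To show that isolated points are dense, fix $ x_0 \in X $ and a neighborhood $ V \ni x_0 $; by Urysohn pick $ f \in \mathcal{C}(X) $ with $ f(x_0) = \| f \| = 1 $ and $ f \equiv 0 $ off $ V $. Applying (4) to $ f $ with $ \varepsilon < 1/2 $ yields a clopen $ U \subseteq X $ (so $ p = \chi_U $) with $ \sup_{ \| g \| \le 1 } \inf_{ | \lambda | \le 1 } \| p g p - \lambda p \| < \varepsilon $ and $ \| f p \| > 1 - \varepsilon $. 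If $ U $ contained distinct points $ t_1, t_2 $, total disconnectedness supplies a clopen $ W $ with $ t_1 \in W \not\ni t_2 $, and then $ g = \chi_W $ gives $ \inf_\lambda \| p g p - \lambda p \| = \inf_\lambda \max ( | 1 - \lambda |, | \lambda | ) = 1/2 $, a contradiction. Hence $ U $ is a single isolated point $ t_0 $, and $ \| f p \| = | f( t_0 ) | > 1 - \varepsilon $ forces $ t_0 \in V $. So isolated points are dense and Henson--Moore finishes this direction.

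Finally, (1)--(4) are genuine closed conditions: (1), (2), (3) are recorded as sentences earlier in the excerpt, while (4) is one because projections form a definable set and $ \lambda $ ranges over the compact disc $ \{ | \lambda | \le 1 \} $, which is a legitimate quantifiable parameter by \textsection 2.3 of \cite{2016arXiv160208072F}. The main obstacle is the interplay of the two summands in (4) in the backward direction, balancing the ``corner is one-dimensional'' clause against the norming clause so as to pin $ U $ down to a point rather than merely to a small clopen.
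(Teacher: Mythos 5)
Your proof is correct, and it shares the paper's skeleton (Gelfand representation, real rank zero $\Leftrightarrow$ total disconnectedness, axiom (4) $\Leftrightarrow$ dense isolated points, then Henson--Moore), but both implications concerning axiom (4) are argued along genuinely different lines. For the forward direction the paper works entirely inside $\mathcal{C}(X)$: given $f$ and $\varepsilon$, the open set $\{x : |f(x)| > \|f\| - \varepsilon\}$ contains an isolated point $x_0$, and $p = \chi_{\{x_0\}}$ does the job; you instead verify (4) on every finite-dimensional algebra via a rank-one projection built from a norming singular vector and transfer by pseudocompactness. Your route proves the slightly stronger fact that (4) holds in all pseudocompact C$^*$-algebras, at the cost of invoking the transfer principle where a two-line topological argument suffices. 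For the backward direction the paper argues the contrapositive: if the isolated points are not dense, Urysohn gives a norm-one $f$ vanishing on their closure, and the paper asserts axiom (4) then evaluates to at least $1$. Your direct argument --- splitting a clopen $U$ with two points by a clopen $W$ to force $\inf_\lambda \max(|1-\lambda|,|\lambda|) = 1/2$ in the first summand --- is actually the more careful one: it shows the correct lower bound for a non-singleton $U$ is $1/2$, not $1$ (the second summand can be small when $U$ is large), so the paper's ``value at least 1'' is a minor overstatement that your $\varepsilon < 1/2$ threshold silently repairs. Your closing remarks on definability of the projections and of the compact parameter set $\{|\lambda| \le 1\}$ address a point the paper leaves implicit.
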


\begin{proof}

The first two axioms guarantee that the C$^*$-algebra is unital and commutative, so it is of the form $ \mathcal{C}( X ) $ with $ X $ compact Hausdorff.  \par

Recall that a commutative unital C$^*$-algebra $ \A \cong \mathcal{C}( X ) $ is real rank zero if and only if $ X $ is totally disconnected.  Thus we just need to justify that every element being approximately normed by a minimal projection is equivalent to $ X $ having a dense set of isolated points. \par

Suppose the underlying compact Hausdorff space $ X $ has a dense subset of isolated points.  Let $ \varepsilon > 0 $ and $ f \in \mathcal{C}( X ) $ be given, then $ \{ x \, : \, | f( x ) | > || f || - \varepsilon \} $ is a non-empty open set, so it contains an isolated point $ x_0 $, so $ p = \chi_{ \{ x_0 \} } $ is a minimal projection (and $ p \A p = \mathbb{C} p $) and $ || p f || = | f( x_0 ) | > || f || - \varepsilon $. \par

Conversely, suppose that $ X $ does not have a dense subset of isolated points.  By Urysohn's lemma there is a function that vanishes on the closure of the isolated points but has norm 1, and this shows that the axiom above has value at least 1.
\end{proof}

Recall that a unital C$^*$-algebra $ \A $ is stable rank one if the invertible elements are dense in $ \A $, see 3.3 in \cite{MR693043}.  An explicit axiomatization of stable rank one is given in Lemma 3.7.2 in \cite{2016arXiv160208072F}. \par

For unital C$^*$-algebras, being finite and being stably finite are axiomatizable properties, see \textsection 3.6(d) in \cite{2016arXiv160208072F}.  \par

For unital C$^*$-algebras, having a tracial state is an axiomatizable property, see \textsection 2.5(f) and \textsection 3.5(a) in \cite{2016arXiv160208072F}.  If $ \tau_i $ are tracial states on $ \A_i $ then $ \lim_\u \tau_i $ is a tracial state on $ \prod_\u \A_i $.  On the other hand if $ \tau $ is a tracial state on $ \A^\u $ then we can get a tracial state $ \tau_0 $ on $ \A $ defined by $ \tau_0( a ) = \tau( \, ( a )_\u \, ) $.  Traces on ultraproducts are studied in \cite{bicefarahtraces}. \par

We collect the above observations:

\begin{prop}
Pseudocompact C$^*$-algebras are unital, real rank zero, stable rank one, stably finite, and tracial.
\end{prop}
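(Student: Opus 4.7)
The plan is to verify each of the five listed properties one at a time by the same template: if a property $P$ is axiomatized by a family of closed conditions $\{\varphi_i \le 0\}$ and every finite-dimensional C$^*$-algebra has $P$, then $\varphi_i^{\mathcal{F}} = 0$ for all finite-dimensional $\mathcal{F}$, and condition (1) in the definition of pseudocompactness forces $\varphi_i^{\mathcal{A}} = 0$ for any pseudocompact $\mathcal{A}$. This is essentially the paradigm the author has just articulated; the work is in plugging in the right axiomatization and the right trivial check on finite-dimensional algebras.

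I would handle unitality first, because the cited axiomatizations of stable rank one, stable finiteness, and existence of a tracial state in \cite{2016arXiv160208072F} are phrased in the unital setting, so it is cleanest to secure an honest identity before invoking them. Unitality is covered by the sentence $\varphi_u$ already introduced in Section 2, and $\varphi_u^{\mathcal{F}} = 0$ for every finite-dimensional $\mathcal{F}$ since such $\mathcal{F}$ has a literal unit. Real rank zero for finite-dimensional $\mathcal{F}$ is immediate from the diagonalizability of self-adjoint matrices (so every self-adjoint has finite spectrum exactly), and its axiomatization is in \textsection 3.6(b) of \cite{2016arXiv160208072F}. Stable rank one for finite-dimensional $\mathcal{F}$ follows by perturbing the finitely many zero eigenvalues of each matrix-block summand off zero, and is axiomatized by Lemma 3.7.2 of \cite{2016arXiv160208072F}. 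Stable finiteness is clear because $M_k(\mathcal{F})$ is again finite-dimensional, where every isometry is a unitary by a dimension count; the axiomatization sits in \textsection 3.6(d) of \cite{2016arXiv160208072F}. Finally, every finite-dimensional $\mathcal{F} \cong \bigoplus M_{n_i}$ has a tracial state given by, say, any convex combination of normalized matrix traces on the summands, and existence of a tracial state is axiomatizable by \textsection 2.5(f) and \textsection 3.5(a) of \cite{2016arXiv160208072F}.

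The only genuine point of care — rather than a real obstacle — is that the cited axiomatizations are typically schemes of closed conditions and sometimes use quantifiers over definable subsets such as self-adjoints, projections, or unitaries (or over compact subsets of $\mathbb{C}$) rather than naive suprema over the closed unit ball. One must therefore be comfortable that these schemes fall under the ``collection of closed conditions'' formalism recalled at the start of Section 4, so that condition (1) of pseudocompactness indeed applies term by term. This is exactly what the continuous-logic preliminaries in Section 2 and the catalogue of axiomatizable properties in \cite{2016arXiv160208072F} provide, and with them in hand each of the five verifications reduces to a single citation plus the one-line check that finite-dimensional C$^*$-algebras enjoy the property.
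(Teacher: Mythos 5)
Your proposal is correct and follows exactly the paper's approach: the paper's "proof" is simply the collection of the preceding observations that each of the five properties is axiomatizable (citing \cite{2016arXiv160208072F}) and holds for all finite-dimensional C$^*$-algebras, so condition (1) of pseudocompactness applies. Your additional remarks on the order of verification and on quantification over definable sets are sensible elaborations but do not change the argument.
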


\end{section}

\begin{section}{Properties of Pseudocompact C$^*$-Algebras}

In this section, we utilize unitaries, projections, and central elements to study the properties of pseudocompact C$^*$-algebras.

\begin{prop}
In pseudocompact C$^*$-algebras, every unitary is homotopic to the identity.  In particular, the $ K_1 $ group of a pseudocompact C$^*$-algebra is trivial.
\end{prop}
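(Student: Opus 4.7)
The plan is to transfer to $ \A $ a classical fact about finite-dimensional C$^*$-algebras: in $ \mathcal{F} = \bigoplus_k M_{n_k} $ every unitary can be simultaneously diagonalized, and therefore has the form $ u = \exp( i h ) $ for some self-adjoint $ h $ with $ \| h \| \le \pi $.  An approximate version of this fact can be phrased in continuous logic so that pseudocompactness yields the analogous approximate statement in $ \A $; a simple spectral argument then promotes the approximation to an honest homotopy.

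Fix $ \varepsilon > 0 $ and, by the Weierstrass theorem, choose a polynomial $ p_\varepsilon $ with $ | p_\varepsilon( t ) - e^{it} | \le \varepsilon $ for every $ t \in [ -\pi, \pi ] $.  Because the unitaries and the self-adjoint elements of norm at most $ \pi $ both form definable sets (the latter by rescaling from the unit ball of self-adjoints), the expression
$$
\varphi_\varepsilon = \sup_{ u \text{ unitary} } \inf_{ h = h^*,\, \| h \| \le \pi } \| u - p_\varepsilon( h ) \|
$$
is a legitimate sentence.  In any finite-dimensional $ \mathcal{F} $, diagonalization together with the functional-calculus bound $ \| \exp( i h ) - p_\varepsilon( h ) \| \le \varepsilon $ give $ \varphi_\varepsilon^{\mathcal{F}} \le \varepsilon $, so pseudocompactness yields $ \varphi_\varepsilon^{\A} \le \varepsilon $.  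Letting $ \varepsilon $ shrink, for each unitary $ u \in \A $ and each $ \delta > 0 $ there is self-adjoint $ h \in \A $ with $ \| h \| \le \pi $ and $ \| u - \exp( i h ) \| < \delta $.

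Pick $ \delta < 2 $.  Then the unitary $ w = u \exp( -i h ) $ satisfies $ \| w - I \| < 2 $, so its spectrum avoids $ -1 $, and the principal branch of the logarithm produces a self-adjoint $ k \in \A $ with $ w = \exp( i k ) $.  Hence $ u = \exp( i k ) \exp( i h ) $, and the map
$$
t \mapsto \exp( i t k ) \exp( i t h ), \qquad t \in [ 0, 1 ],
$$
is a norm-continuous path of unitaries from $ I $ to $ u $.  Since $ M_n( \A ) \cong \A \otimes M_n $ is pseudocompact by Proposition 3.1, the same argument connects every unitary in each matrix algebra over $ \A $ to the identity, forcing $ K_1( \A ) = 0 $.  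The most delicate step is verifying the first-order status of $ \varphi_\varepsilon $; this rests on the definability of the sets the quantifiers range over and on the existence of polynomial terms that approximate $ \exp $ uniformly on the relevant domain.
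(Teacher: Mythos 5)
Your argument is correct and follows essentially the same route as the paper: both transfer the finite-dimensional fact that every unitary equals $ \exp( i h ) $ for some self-adjoint $ h $ with $ \| h \| \le \pi $ in order to conclude that exponential unitaries are norm-dense in the unitary group of a pseudocompact C$^*$-algebra, and then pass from density to homotopy with the identity. The only cosmetic differences are that the paper performs the transfer via representing sequences in an ultraproduct of finite-dimensional algebras rather than via your explicit sentence $ \varphi_\varepsilon $, and it cites the standard fact that unitaries at distance less than $ 2 $ are homotopic instead of writing out the logarithm construction.
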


\begin{proof}
Recall that a pseudocompact C$^*$-algebra has an ultrapower isomorphic to an ultraproduct of finite-dimensional C$^*$-aglebras.  In a finite-dimensional C$^*$-algebra, every unitary is homotopic to the identity, and in fact every unitary is of the form $ \exp( 2 \pi i t ) $ for some self-adjoint contraction $ t $.  Every unitary in an ultraproduct of finite-dimensional algebras has a representing sequence of unitaries (see Proposition 2.1 in \cite{MR1902808}), and $ \exp( ( x_n )_\u ) = ( \exp( x_n )_\u ) $ as the exponential function is a uniform limit of polynomials on the closed ball of radius $ || ( x_n )_\u || $.  Thus we see that every unitary in an ultraproduct of finite-dimensional C$^*$-algebras is also an exponential of a skew-self-adjoint element.  If every unitary element in an ultrapower $ \A^\u $ is an exponential of a skew-self adjoint, then every unitary in $ \A $ is a norm-limit of exponentials of skew-self-adjoint elements.   Recall that if two unitaries are within distance two of each other, they are homotopic, see 2.3.1 in \cite{MR1656031}.  Thus every unitary in a pseudocompact algebra is the norm-limit of exponentials of a skew-self-adjoint element, and thus is homotopic to the identity.
\end{proof}

Given that finite-dimensional C$^*$-algebras are determined by matrix units, and the equations defining matrix units are weakly stable, it is not surprising that they are our main tool for understanding pseudocompact C$^*$-algebras.  For instance, using the fact that a projection in an ultrapower has a representative sequence of projections (see Proposition 2.1 in \cite{MR1902808}) and the Keisler-Shelah theorem, one can show the following:

\begin{prop}
If $ p $ is a projection in a pseudocompact C$^*$-algebra $ \A $ then $ p \A p $ is pseudocompact.
\end{prop}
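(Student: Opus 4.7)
The plan is to verify condition (3) of pseudocompactness for $p\A p$ by exhibiting an ultrapower of $p\A p$ that is itself an ultraproduct of finite-dimensional C$^*$-algebras. Since $\A$ is pseudocompact, Keisler-Shelah together with the fact that an ultrapower of an ultraproduct is again an ultraproduct (over a suitably combined ultrafilter) yields an ultrafilter $\u$, an index set, and finite-dimensional C$^*$-algebras $\mathcal{F}_i$ with $\A^\u \cong \prod_\u \mathcal{F}_i$.

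Under this isomorphism, the projection $(p)_\u \in \A^\u$ corresponds to a projection $q$ in $\prod_\u \mathcal{F}_i$. By weak stability of the projection relations (Proposition 2.1 in \cite{MR1902808}), $q$ admits a representing sequence $(p_i)_\u$ with each $p_i$ an honest projection in $\mathcal{F}_i$. Each corner $p_i \mathcal{F}_i p_i$ is again finite-dimensional, and a routine computation with representing sequences identifies
\[
p\, \A^\u\, p \;\cong\; q \Big( \prod_\u \mathcal{F}_i \Big) q \;\cong\; \prod_\u p_i \mathcal{F}_i p_i ,
\]
exhibiting $p\, \A^\u\, p$ as an ultraproduct of finite-dimensional C$^*$-algebras.

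To close the argument, I would identify $(p\A p)^\u$ with $p\, \A^\u\, p$ via the constant-sequence representative of $p$: every element of $p\, \A^\u\, p$ has a representing sequence of the form $(p x_i p)_\u$, which lies in $(p\A p)^\u$, and the reverse inclusion is trivial. Hence $(p\A p)^\u$ is an ultraproduct of finite-dimensional C$^*$-algebras, and by \L os' theorem $p\A p \equiv (p\A p)^\u$, so $p\A p$ is elementarily equivalent to an ultraproduct of finite-dimensional C$^*$-algebras, verifying condition (3).

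The only delicate step is lifting the projection $q$ to a sequence of honest projections $(p_i)_\u$ in the $\mathcal{F}_i$; this is exactly where weak stability of the relations $p^2 = p = p^*$ enters, and it is precisely the feature that fails for arbitrary $*$-subalgebraic inclusions, which explains why pseudocompactness does not pass to subalgebras in general even though it passes to corners.
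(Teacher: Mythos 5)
Your proposal is correct and is precisely the argument the paper intends: the paper leaves this as a sketch, citing only the Keisler--Shelah theorem and the fact (Proposition 2.1 in \cite{MR1902808}) that a projection in an ultraproduct has a representing sequence of projections, and your write-up fills in exactly those steps, identifying $q\bigl(\prod_\u \mathcal{F}_i\bigr)q$ with $\prod_\u p_i\mathcal{F}_i p_i$ and $(p\A p)^\u$ with $p\,\A^\u\,p$. Your closing remark correctly isolates the lifting of projections as the point where corners differ from general subalgebras.
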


In other words, a corner of a pseudocompact C$^*$-algebra is pseudocompact.  The next result is an analogue of the fact that minimal projections in finite-dimensional C$^*$-algebras are exactly the rank one projections.

\begin{prop}
In a pseudocompact C$^*$-algebra $ \A $, a (non-zero) projection $ p $ is minimal if and only if $ p \A p = \mathbb{C} p $.
\end{prop}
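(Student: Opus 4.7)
The plan is to reduce the nontrivial direction to the special case where the minimal projection is the identity, then exploit real rank zero. The easy direction is immediate: if $p\A p = \mathbb{C}p$ and $q$ is a subprojection of $p$, then $q = pqp = \lambda p$ for some $\lambda \in \mathbb{C}$, and a projection equal to $\lambda p$ forces $\lambda \in \{0,1\}$, so $q = 0$ or $q = p$.

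For the forward direction, suppose $p$ is a minimal projection in $\A$. By the preceding proposition, $p\A p$ is itself a pseudocompact C$^*$-algebra, and $p$ is its identity. Moreover, $p$ remains minimal in $p\A p$, since any subprojection of $p$ in $p\A p$ is also a subprojection of $p$ in $\A$. Thus the problem reduces to showing: if $\mathfrak{B}$ is a pseudocompact C$^*$-algebra whose unit $1_\mathfrak{B}$ is a minimal projection, then $\mathfrak{B} = \mathbb{C}\cdot 1_\mathfrak{B}$.

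For this reduction, I would invoke the previously established fact that pseudocompact C$^*$-algebras are real rank zero. Thus the self-adjoint elements with finite spectrum are norm-dense in the self-adjoint part of $\mathfrak{B}$. Any such element has the form $a = \sum_{i=1}^n \lambda_i p_i$ where the $p_i$ are pairwise orthogonal projections summing to $1_\mathfrak{B}$. Since each $p_i \leq 1_\mathfrak{B}$ and $1_\mathfrak{B}$ is minimal, each $p_i$ is either $0$ or $1_\mathfrak{B}$; orthogonality and the sum being $1_\mathfrak{B}$ force exactly one $p_i$ to equal $1_\mathfrak{B}$. Hence $a \in \mathbb{R}\cdot 1_\mathfrak{B}$. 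Since $\mathbb{R}\cdot 1_\mathfrak{B}$ is norm-closed and contains a dense subset of the self-adjoint part, the self-adjoint part of $\mathfrak{B}$ equals $\mathbb{R}\cdot 1_\mathfrak{B}$, and therefore $\mathfrak{B} = \mathbb{C}\cdot 1_\mathfrak{B}$, as required.

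There isn't really a hard step here: the whole argument is a straightforward assembly of two tools established earlier in the paper, namely the stability of pseudocompactness under corners and the real rank zero property. If anything, the only thing to be careful about is the reduction step; one must check that minimality of $p$ in $\A$ transfers to minimality of $p$ as the unit of $p\A p$, but this is immediate from the inclusion of projection lattices.
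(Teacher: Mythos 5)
Your proof is correct, but it takes a genuinely different route from the paper. The paper's entire proof consists of exhibiting a single sentence of continuous logic whose vanishing expresses ``every non-zero minimal projection $p$ satisfies $p\A p = \mathbb{C}p$,'' checking (implicitly) that it vanishes in every finite-dimensional C$^*$-algebra, and invoking the definition of pseudocompactness; the converse implication is trivially true in every C$^*$-algebra, exactly as in your easy direction. You instead assemble two earlier results --- that corners of pseudocompact algebras are pseudocompact, and that pseudocompact algebras have real rank zero --- and run a direct functional-calculus argument in the unital algebra $p\A p$ whose only projections are $0$ and $p$. Both arguments are sound. What yours buys is generality and transparency: the same argument shows that in \emph{any} C$^*$-algebra of real rank zero a minimal projection $p$ satisfies $p\A p = \mathbb{C}p$ (indeed you do not even need the corner to be pseudocompact, only that real rank zero passes to the hereditary subalgebra $p\A p$, which it does), so pseudocompactness enters only through the real rank zero property. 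What the paper's approach buys is consistency with its stated paradigm --- identifying axiomatizable properties of finite-dimensional algebras --- and independence from the earlier propositions, since the axiomatization stands on its own. Your reduction step (minimality of $p$ in $\A$ transfers to $p$ as the unit of $p\A p$) is handled correctly.
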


\begin{proof}
This is an axiomatizable property:
\begin{equation*}
\sup_{ \substack{ p \ne 0 \\ \text{ proj. } } }  \left( \sup_{ || a || \le 1 } \inf_{ | \lambda | \le 1 } || p a p - \lambda p || \,  \right) \cdot \left( \inf_{ \substack{ q \ne 0 \\ \text{ proj. } } } || p q - q || + \big| \, || p - q || - 1 \big| \, \right) = 0 . \qedhere
\end{equation*}
\end{proof}

Using Proposition 2.1 in \cite{MR1902808}, it is straight-forward to check that a projection $ p $ in an ultraproduct $ \prod_\U \A_i $ is minimal if and only if it has a representative sequence $ p = ( p_i )_\U $ so $ \{ i \, | \, \text{ $ p_i $ is a minimal projection in } \A_i \} \in \U $.  Since a pseudocompact C$^*$-algebra has an ultrapower isomorphic to an ultraproduct of finite-dimensional C$^*$-algebras, pseudocompact C$^*$-algebras have many minimal projections.

\begin{prop}
In a pseudocompact C$^*$-algebra $ \A $, every non-zero projection dominates a minimal projection.
\end{prop}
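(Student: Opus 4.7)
The plan is to pass to the corner $p\A p$ and then use two closed conditions true in every finite-dimensional C$^*$-algebra to pin down an exact minimal projection there. By the previous two propositions, $p\A p$ is itself pseudocompact (and non-zero since $p\ne 0$), and a non-zero projection $q\in p\A p$ is minimal precisely when $q(p\A p)q=\mathbb{C}q$. Any such $q$ satisfies $q=pqp$, so $q\A q = q(p\A p)q = \mathbb{C}q$; hence $q$ is minimal in $\A$ as well, and trivially $q\le p$. Thus the task reduces to showing that every non-zero pseudocompact C$^*$-algebra $\mathfrak{B}$ contains a minimal projection.

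Write $\psi(q) = \sup_{\|a\|\le 1}\inf_{|\lambda|\le 1}\|qaq-\lambda q\|$, so that, by the minimality characterization of the previous proposition, a non-zero projection $q$ is minimal iff $\psi(q)=0$. I would verify, in every finite-dimensional $\mathcal{F}$, the two closed conditions
\begin{equation*}
\inf_{q\text{ proj.}}\|1\|\bigl(\psi(q)+(1-\|q\|)\bigr) = 0 \quad\text{and}\quad \sup_{q\text{ proj.}}\psi(q)\bigl(1-\psi(q)\bigr) = 0,
\end{equation*}
which will then transfer to $\mathfrak{B}$ by pseudocompactness. The first is clear: if $\mathcal{F}=0$ then $\|1\|=0$, and otherwise any minimal projection of $\mathcal{F}$ witnesses the infimum. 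The second amounts to the claim that $\psi(q)\in\{0,1\}$ for every projection $q$ in $\mathcal{F}$, which is the key technical point.

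For the two-valuedness: $\psi(q)\le 1$ always, and $\psi(q)=0$ when $q$ is zero or minimal. Otherwise $q\mathcal{F}q$ is a finite-dimensional unital C$^*$-algebra of dimension at least two, from which I would exhibit a self-adjoint contraction $y\in q\mathcal{F}q$ with $\{-1,1\}\subseteq\sigma(y)$ --- for instance $\mathrm{diag}(1,-1,0,\ldots)$ inside an $M_k$-summand with $k\ge 2$, or the difference of two orthogonal non-zero projections in a commutative summand with at least two minimal projections. Such a $y$ satisfies $\|y-\lambda q\|\ge 1$ for every $\lambda\in\mathbb{C}$ (since $\max(|1-\lambda|,|{-1}-\lambda|)\ge 1$), so $\psi(q)\ge 1$ and hence $\psi(q)=1$.

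Both sentences now hold in $\mathfrak{B}$, which is unital with $\|1_\mathfrak{B}\|=1$. Fixing any $\varepsilon\in(0,1/2)$, the first produces a projection $q\in\mathfrak{B}$ with $\|q\|>1-\varepsilon$ and $\psi(q)<\varepsilon$; since projections have norm $0$ or $1$, $q\ne 0$, and the second forces $\psi(q)\in\{0,1\}$, so $\psi(q)=0$. Therefore $q$ is a minimal projection in $\mathfrak{B}$, finishing the proof after the corner reduction. The main technical obstacle is the discreteness $\psi(q)\in\{0,1\}$ for projections in finite-dimensional algebras; everything else is a routine transfer via pseudocompactness.
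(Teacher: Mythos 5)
Your proof is correct, and it takes a genuinely more elaborate route than the paper's. The paper's entire proof is a single closed condition,
$\sup_{ q \ne 0 \text{ proj.} } \inf_{ p \ne 0 \text{ proj.} } \sup_{ \| a \| \le 1 } \inf_{ | \lambda | \le 1 } \| p a p - \lambda p \| + \| q p - p \| = 0$,
asserted to axiomatize the property directly: no corner reduction, no second sentence. What your version adds is precisely the step that one-liner leaves implicit: a condition of the form $\inf_p (\cdots) = 0$ only yields, for each $\varepsilon$, a projection that is $\varepsilon$-approximately minimal and $\varepsilon$-approximately under $q$, and one must still convert that approximate witness into an honest minimal subprojection. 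Your dichotomy $\psi(q) \in \{0,1\}$ for projections in finite-dimensional algebras (proved via a self-adjoint contraction in $q \mathcal{F} q$ with $\pm 1$ in its spectrum), transferred through the closed condition $\sup_q \psi(q)(1 - \psi(q)) = 0$, closes that gap cleanly: any projection with $\psi < 1$ in a pseudocompact algebra is then \emph{exactly} minimal. The corner reduction via the two preceding propositions is what lets you drop the constraint ``$p$ dominated by $q$'' and simply hunt for a minimal projection in all of $p \A p$, at the cost of making the argument depend on those propositions rather than standing alone. The only quibbles are cosmetic: the $\|1\|$ factor guarding against the zero algebra is unnecessary (that case is normally excluded), $1 - \|q\|$ should formally be truncated subtraction, and one should remark that norm-one projections form a definable set so your quantifiers are legitimate --- though the paper's own sentence quantifies over non-zero projections and needs the same remark.
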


\begin{proof}
This is an axiomatizable property:
$$
\sup_{  \substack{ q \ne 0 \\ \text{ proj. } } } \inf_{ \substack{ p \ne 0 \\ \text{ proj. } } } \sup_{ || a || \le 1 } \inf_{ | \lambda | \le 1 } || p a p - \lambda p || +  || q p - p || = 0 .
$$
In pseudomaticial C$^*$-algebras, the existence of minimal projections allows us to use
\begin{equation*}
\inf_{ \substack{ p \ne 0 \\ \text{ proj. } } } \sup_{  \substack{ q \ne 0 \\ \text{ proj. } } } \inf_{ v \text{ partial isom. } } || v^* v - p || + || q ( v v^* ) - v v^* || = 0 . \qedhere
\end{equation*}
\end{proof}

Notice that this is quite different than the tracial von Neumann algebra case considered in Section 4 of \cite{MR2844455}, Proposition 6.5 in \cite{MR3265292}, and Section 5 of \cite{MR3210717}.  \par

Since UHF algebras lack minimal projections, we observe \par

\begin{prop}
UHF algebras are not pseudocompact.
\end{prop}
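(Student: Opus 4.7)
The plan is to derive a contradiction from the immediately preceding proposition, which says that in a pseudocompact C$^*$-algebra every non-zero projection dominates a minimal projection. So it suffices to show that a UHF algebra contains no minimal projection at all; since the identity is a non-zero projection, this would already prevent the UHF algebra from satisfying the domination property.

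To show UHF algebras have no minimal projections, I would use the unique tracial state $\tau$ on a UHF algebra $\A = \overline{\bigcup_k M_{n_k}}$. The key feature is that $\tau$ takes values on projections in the dense subset $\{m/N : N \text{ divides some } n_k,\ 0 \le m \le N\}$ of $[0,1]$. Given any non-zero projection $p \in \A$, one can find, for $k$ large enough, a matrix subalgebra $M_{n_k} \subseteq \A$ that "almost contains" $p$ and an element whose trace is strictly between $0$ and $\tau(p)$; a standard perturbation/functional-calculus argument then produces an honest projection $q \le p$ with $0 < \tau(q) < \tau(p)$, so $p$ cannot be minimal. Equivalently, one can argue directly: the corner $p \A p$ is again an infinite-dimensional C$^*$-algebra (indeed it contains a unital copy of some $M_n$ with $n \ge 2$ for $k$ large), and in particular $p \A p \neq \mathbb{C} p$, so by the characterization of minimal projections recalled just above (in the previous proposition, $p$ minimal iff $p\A p = \mathbb{C} p$) $p$ is not minimal.

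Putting these together: any non-zero projection in a UHF algebra fails to be minimal, so in particular the identity does not dominate any minimal projection, contradicting the proposition for non-zero projections dominating minimal ones. Hence no UHF algebra can be pseudocompact.

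The main obstacle, such as it is, is really just a bookkeeping matter: one has to produce an actual projection (not just an approximate one) below a given $p$ with strictly smaller trace. This is handled by the standard trick of approximating $p$ to within a small norm by an element of some $M_{n_k}$, cutting by a rank-one projection there, and then using weak stability of the projection relations together with functional calculus to promote the approximate subprojection to a genuine projection $q \le p$ in $\A$ with $0 < \tau(q) < \tau(p)$.
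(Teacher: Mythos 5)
Your proof is correct and follows essentially the same route as the paper, which justifies the proposition by the single remark that UHF algebras lack minimal projections while (by the preceding proposition) every non-zero projection in a pseudocompact C$^*$-algebra dominates one. You simply supply the standard details, via the unique trace and the dense union of matrix subalgebras, of why UHF algebras have no minimal projections, which the paper takes as known.
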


Let $ n $ and $ d $ be natural numbers.  Then we can write $ n = k d + r $ where $ 0 \le r < d $.  This means the identity in $ M_n $ is the orthogonal sum of $ d $ Murray-von Neumann equivalent projections (each of rank $ k $) and fewer than $ d $ minimal, (i.e. rank one) projections.  We can similarly decompose the identity in any pseudocompact C$^*$-algebra:

\begin{prop}
Let $ d \ge 2 $ be a natural number.

\begin{enumerate}

\item{}
In a pseudomatrical C$^*$-algebra, the identity can be written as a sum of $ d $ orthogonal Murray-von Neumann equivalent projections plus $ d - 1 $ orthogonal minimal or zero projections.

\item{}
In a pseudocompact C$^*$-algebra, the identity can be written as a sum of $ d $ orthogonal Murray-von Neumann equivalent projections plus $ d - 1 $ orthogonal abelian (or zero) projections.

\end{enumerate}

\end{prop}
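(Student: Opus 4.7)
My plan is to follow the axiomatizability paradigm used throughout the paper: express the claimed decomposition as a closed condition, verify it holds in every matrix algebra (for (1)) and every finite-dimensional C$^*$-algebra (for (2)), and transfer to $\A$ by pseudomatricialness (resp.\ pseudocompactness). The baseline construction is the division algorithm. Writing $n = kd + r$ with $0 \le r < d$, I partition a basis of $M_n$ into $d$ blocks of $k$ vectors and $r$ singletons; the block projections $p_1, \ldots, p_d$ are orthogonal and Murray--von Neumann equivalent via the obvious shifts $v_j$ satisfying $v_j^* v_j = p_1$ and $v_j v_j^* = p_j$, and taking $q_1, \ldots, q_r$ to be the singleton projections and $q_{r+1} = \cdots = q_{d-1} = 0$ supplies the final $d-1$ minimal or zero projections needed for (1). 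For a general $\bigoplus_i M_{n_i}$ I apply this in each summand and add componentwise: each global $q_\ell = \bigoplus_i q_\ell^{(i)}$ has corner $\bigoplus_i q_\ell^{(i)} M_{n_i} q_\ell^{(i)}$, a direct sum of copies of $\mathbb{C}$ and $\{0\}$, hence commutative, so $q_\ell$ is abelian or zero, giving (2).

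Fixing $d \ge 2$, I would encode the decomposition as the single closed condition
\begin{equation*}
\inf \Bigl[\, \bigl\| I - \textstyle\sum_j p_j - \sum_\ell q_\ell \bigr\| + \sum_{j<k} \|p_j p_k\| + \sum_{\ell<m} \|q_\ell q_m\| + \sum_{j,\ell} \|p_j q_\ell\| + \sum_{j\ge 2} \bigl(\|v_j^* v_j - p_1\| + \|v_j v_j^* - p_j\|\bigr) + \sum_\ell \Psi(q_\ell) \,\Bigr] = 0,
\end{equation*}
with the infimum over projections $p_j, q_\ell$ and partial isometries $v_j$, where $\Psi(q)$ is the minimality defect $\sup_{\|a\|\le 1} \inf_{|\lambda|\le 1} \|qaq - \lambda q\|$ for (1) or the abelianness defect $\sup_{\|a\|,\|b\|\le 1} \|(qaq)(qbq) - (qbq)(qaq)\|$ for (2). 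The construction above makes this infimum zero in every matrix (resp.\ finite-dimensional) algebra, and hence in $\A$.

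The remaining task is to upgrade approximate witnesses to an exact decomposition. For the matrix-unit portion $(p_j, v_j)$, I would use the weak stability of systems of orthogonal projections with intertwining partial isometries (\S 3.2 of \cite{2016arXiv160208072F}, \cite{MR1420863}) to perturb a sufficiently precise approximate witness to an exact one, and then pass to the residual corner $p \A p$ with $p = I - \sum_j p_j$, which is pseudocompact by the corner proposition. The main obstacle I foresee is producing the exact minimal or abelian summands: unlike the matrix-unit relations, the conditions $\Psi(q_\ell) = 0$ are universal, so their weak stability is not automatic. For the pseudomatricial case I would leverage the earlier facts that minimal projections form a definable set and every nonzero projection dominates a minimal one, peeling off minimal projections from $p$ one at a time until exhaustion; the closed-condition bound prevents this peeling from requiring more than $d-1$ steps. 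For the pseudocompact case the peeling yields many minimal projections which I would regroup into $d-1$ orthogonal sums, each having commutative corner and thus being an abelian projection, mimicking the finite-dimensional combination across direct summands.
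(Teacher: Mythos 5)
Your first two paragraphs are, in substance, the paper's entire proof: the division-algorithm decomposition of $M_n$ (and its summandwise extension to $\bigoplus_i M_{n_i}$) is exactly the paper's setup in the text preceding the proposition, and the paper then simply declares the property axiomatizable and writes down the $d=2$ sentence $\inf_{v}\inf_{p}\,\| I - (v^*v + vv^* + p)\| + \sup_{\|x\|,\|y\|\le 1}\|pxpyp - pypxp\|$, which uses the same abelianness defect as your $\Psi$ for part (2). On that core you match the paper.

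Your third paragraph addresses a point the paper passes over in silence, and you are right to worry about it: a condition $\inf(\cdots)=0$ produces only approximate witnesses, whereas the proposition asserts an exact decomposition. Your weak-stability handling of the matrix-unit portion is the right move, but the peeling step is where your argument has a genuine gap. The assertion that ``the closed-condition bound prevents this peeling from requiring more than $d-1$ steps'' does not follow: the value of the sentence says nothing about how many orthogonal minimal projections are needed to exhaust the residual corner $p\A p$ exactly, and a priori that corner could dominate arbitrarily many orthogonal minimal projections without the sentence detecting it (indeed, absorbing $d$ minimal subprojections of $p$ into the $p_j$ just restarts the process). A cleaner way to get the exact statement is the ultraroot argument already used elsewhere in the paper: in $\A^{\U}\cong\prod_{\V}F_i$ the decomposition holds exactly coordinatewise; a minimal projection in an ultraproduct admits a representative sequence of minimal projections (the remark following the characterization $p\A p=\mathbb{C}p$); a finite orthogonal family of projections summing to $I$ and intertwined by partial isometries is weakly stable, so the resulting approximate witnesses in $\A$ perturb to exact ones; and since projections at distance less than one are unitarily equivalent, the perturbed $q_\ell$ remain minimal (resp.\ abelian). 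That closes the gap without any peeling.
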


\begin{proof}
This is an axiomatizable property.  Here is the case $ d = 2 $:
\[
\inf_{ v \text{ a partial isom. } } \inf_{ p \text{ a proj }  } || I - ( v^* v + v v^* + p ) \, || + \sup_{ || x || , || y || \le 1 } || p x p y p - p y p x p || = 0 . \qedhere
\]
\end{proof}

When we write the identity as $ I = r + \sum_{ i = 1 }^d p_i $ where the $ p_i $ are orthogonal, pairwise Murray-von Neumann equivalent projections and $ r $ is a sum of $ d - 1 $ or fewer orthogonal minimal projections, we call a $ p_i $ an approximate $ 1/d^{ \text{th} } $ of the identity.

\begin{prop}
The unitization of the compact operators on an infinite-dimensional Hilbert space is not pseudocompact.
\end{prop}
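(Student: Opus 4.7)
The plan is to exhibit in $\A = \K(\H)^\sim$ a positive lower bound on the axiomatizable quantity from the $d = 2$ case of the previous proposition. Since that sentence evaluates to zero in every finite-dimensional C$^*$-algebra, and hence in every pseudocompact C$^*$-algebra, any such lower bound rules out pseudocompactness of $\A$.

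The key tool is the unique tracial state $\tau$ on $\A$. Since $\A/\K(\H) \cong \mathbb{C}$, every element has the form $k + \lambda I$ with $k \in \K(\H)$, and $\tau(k + \lambda I) = \lambda$ is tracial. A direct calculation on the spectrum shows that every projection in $\A$ is either a finite-rank compact projection, on which $\tau = 0$, or has the form $I - q$ with $q$ finite rank, on which $\tau = 1$.

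Then I would fix any partial isometry $v$ and any projection $p$ in $\A$, set $t = \|I - (v^*v + vv^* + p)\|$ and $s = \sup_{\|x\|, \|y\| \le 1} \|pxpyp - pypxp\|$, and split into cases by trace. Since $\tau(v^*v) = \tau(vv^*)$ and $\tau(p)$ each lie in $\{0,1\}$, the sum $2\tau(v^*v) + \tau(p)$ takes values in $\{0,1,2,3\}$, and norm-continuity of $\tau$ gives $t \ge |1 - 2\tau(v^*v) - \tau(p)|$. Either this forces $t \ge 1$ outright, or else $\tau(v^*v) = 0$ and $\tau(p) = 1$. In the latter case $p$ is co-finite rank, so $p\H$ is infinite-dimensional and $p\A p \supseteq p\K(\H)p \cong \K(p\H)$ is non-commutative; applying the earlier lemma (which shows the commutator sup on contractions equals $2$ in any non-commutative C$^*$-algebra) to the corner $p\A p$ yields $s = 2$. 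In either case $t + s \ge 1$, so the $d=2$ axiom evaluates to at least $1$ in $\A$, contradicting pseudocompactness.

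The main obstacle is spotting the right combination of tools; once the tracial state is in hand and one observes that $\tau$ takes only values $0$ and $1$ on projections of $\A$, the dichotomy of cases and the factor of $2$ supplied by the earlier commutator lemma make the argument essentially automatic.
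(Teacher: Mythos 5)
Your argument is correct and is essentially the paper's own proof, which observes in one line that projections in the unitization of the compacts are either finite- or co-finite-rank, so there are no approximate halves of the identity. You have simply made that observation quantitative, using the tracial state $\tau(k+\lambda I)=\lambda$ and the commutator lemma to produce an explicit lower bound of $1$ on the $d=2$ sentence.
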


\begin{proof}
In the unitization of the compacts, projections are either finite-dimensional or co-finite-dimensional, so there are no approximate halves of the identity.
\end{proof}

\begin{conj}
$$
\prod_\u M_{ n_i } \equiv \prod_{ \V } M_{ m_j } \qquad \text{ if and only if } \qquad \lim_\u n_i \bmod d = \lim_{ \V } m_i \bmod d \quad \text{ for all } d \in \mathbb{N} 
$$
\end{conj}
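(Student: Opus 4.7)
The forward direction I would handle by refining the sentence from the proposition above that decomposes the identity into $d$ Murray--von Neumann equivalent projections plus fewer than $d$ minimal projections. For each $d \ge 2$ and each $0 \le r < d$, I would write a sentence $\varphi_{d,r}$ whose zero condition asserts that $I$ splits as $I = \sum_{i=1}^d p_i + \sum_{j=1}^r q_j$ with the $p_i$ orthogonal, pairwise Murray--von Neumann equivalent (possibly zero) projections and the $q_j$ orthogonal \emph{nonzero} minimal projections orthogonal to all the $p_i$. Forcing the $q_j$ to be nonzero and minimal and the sum to exhaust $I$, one checks $\varphi_{d,r}^{M_n} = 0$ if and only if $n \equiv r \pmod d$. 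By \L os' theorem, $\varphi_{d,r}^{\prod_\u M_{n_i}} = 0$ iff $\{i : n_i \equiv r \pmod d\} \in \u$, which is exactly $\lim_\u n_i \bmod d = r$. Since elementary equivalence preserves every sentence value, the forward direction follows immediately.

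The reverse direction is the substance of the conjecture. My plan would be a back-and-forth argument building a $*$-isomorphism between suitable ultrapowers of the two sides, which by Keisler--Shelah suffices for elementary equivalence. Systems of matrix units are cut out by weakly stable relations and are therefore definable, so a finite partial $*$-isomorphism between matrix-unit subalgebras of the two ultraproducts lifts to representative sequences drawn from the individual $M_{n_i}$ and $M_{m_j}$. If the current partial match uses matrix units of total rank at most $N$, then almost everywhere the leftover corners are $M_{n_i - a_i}$ and $M_{m_j - b_j}$; the mod-$d$ hypothesis applied to $d \le N!$ should let one adjoin one more matrix unit on each side consistently, using that the residue data on the corners is forced to agree. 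An alternative route would be to identify the first-order theory directly with invariants of the generalized integer $(n_i)_\u$ acting on the $K_0$ group and unique trace of the ultraproduct, but controlling arbitrary sentences through that lens appears harder.

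The principal obstacle, and likely the reason this is posed as a conjecture rather than a theorem, is that the back-and-forth must handle arbitrary contractions, not merely matrix units. Morally, a sentence $\psi$ of ``quantifier complexity'' $k$ ought to depend only on $n \bmod d$ for some $d = d(\psi)$, but upgrading this heuristic into a uniform approximation statement for the evaluation of $\psi$ on $M_n$ is delicate. The parallel with Ax's theorem on pseudofinite fields is close: Ax requires deep Galois-theoretic and geometric input (such as Lang--Weil estimates) precisely to make ``bounded-complexity sentences depend on bounded residue data'' rigorous. I expect the C$^*$-algebraic analogue will require either sharp approximation results tying spectral and polynomial data in $M_n$ to the residues $n \bmod d$, or a model-theoretic stability argument tailored to the class $\{M_n : n \in \mathbb{N}\}$; neither appears routine, and making one of them work is the real content of the conjecture.
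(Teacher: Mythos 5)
This statement is posed as a conjecture, and the paper proves only the forward direction (the remark immediately following it observes that if the congruence condition fails the ultraproducts are not elementarily equivalent, which is detected by exactly the identity-decomposition sentences of the preceding proposition), so your $\varphi_{d,r}$ argument reproduces the paper's approach to the only direction that is actually established; the one point to make explicit is that \L os' theorem gives $\varphi_{d,r}^{\prod_\u M_{n_i}} = \lim_\u \varphi_{d,r}^{M_{n_i}}$, so you need the values $\varphi_{d,r}^{M_n}$ for $n \not\equiv r \pmod d$ to be bounded below \emph{uniformly in $n$} (this follows from weak stability of the projection and partial-isometry relations, which lets you pass from approximate to exact decompositions with a modulus independent of $n$, whereupon uniqueness of division with remainder forces $r = n \bmod d$). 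Your reverse direction is, as you correctly flag, only a plan and is precisely the open content of the conjecture; the paper offers no proof of it, so there is nothing to compare your back-and-forth sketch against.
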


If the condition on the right fails, the resulting pseudocompact C$^*$-algebras are not elementarily equivalent and thus not $ * $-isomorphic. \par

\begin{cor}
There are uncountably many $ * $-isomorphism classes of separable pseudomatrical C$^*$-algebras.
\end{cor}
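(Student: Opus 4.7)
The plan is to exhibit, for each subset $T$ of the set of primes $\mathbb{P}$, a separable pseudomatricial C$^*$-algebra $\mathfrak{B}_T$ so that distinct subsets give non-elementarily-equivalent, hence non-$*$-isomorphic, algebras. Since there are $2^{\aleph_0}$ subsets of $\mathbb{P}$, this would immediately yield uncountably many $*$-isomorphism classes.

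First, for each $T \subseteq \mathbb{P}$ and each $i \in \mathbb{N}$, I would use the Chinese Remainder Theorem to choose a positive integer $n_i^T$ such that for every prime $q \le i$, $q \mid n_i^T$ if and only if $q \in T$. Fixing any non-principal ultrafilter $\u$ on $\mathbb{N}$, set $\mathfrak{A}_T := \prod_\u M_{n_i^T}$, which is an ultraproduct of matrix algebras and therefore pseudomatricial by definition.

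Next, for each prime $q$, I would refine the proof of the preceding proposition on decomposing the identity to obtain a sentence $\varphi_q$ which asserts that $I$ is a sum of $q$ pairwise orthogonal, pairwise Murray--von Neumann equivalent projections with no remainder. The point is that $\varphi_q^{M_n} = 0$ iff $q \mid n$, so by \L os' theorem $\varphi_q^{\mathfrak{A}_T} = 0$ iff $\{i : q \mid n_i^T\} \in \u$. By construction, this set is cofinite when $q \in T$ (it contains every $i \ge q$) and is contained in $\{0, 1, \ldots, q-1\}$ when $q \notin T$, so it belongs to the non-principal $\u$ precisely when $q \in T$. Consequently, for $T \ne T'$ and any prime $q \in T \triangle T'$, the sentence $\varphi_q$ distinguishes $\mathfrak{A}_T$ from $\mathfrak{A}_{T'}$, showing $\mathfrak{A}_T \not\equiv \mathfrak{A}_{T'}$.

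Finally, I would invoke the downward L\"owenheim--Skolem theorem to replace each $\mathfrak{A}_T$ by a separable elementary subalgebra $\mathfrak{B}_T \preceq \mathfrak{A}_T$. Being elementarily equivalent to the pseudomatricial $\mathfrak{A}_T$, each $\mathfrak{B}_T$ is itself pseudomatricial; and because $\mathfrak{B}_T \equiv \mathfrak{A}_T \not\equiv \mathfrak{A}_{T'} \equiv \mathfrak{B}_{T'}$, no two are $*$-isomorphic. The main technical obstacle is ensuring that the matrix-unit relations entering the definition of $\varphi_q$ are weakly stable in a uniform way, so that $\varphi_q^{M_n}$ takes only the value $0$ or values bounded below by some $\varepsilon_q > 0$; this uniform gap is what lets \L os' theorem convert the limit condition $\lim_\u \varphi_q^{M_{n_i^T}} = 0$ into the set-membership condition used above, and should follow from the same stability phenomena that underpin the preceding proposition.
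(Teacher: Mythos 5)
Your proposal is correct and follows essentially the same route the paper intends: it uses the mod-$d$ decomposition sentences (the provable direction of the conjecture, resting on weak stability of the relevant projection/partial-isometry relations) to produce $2^{\aleph_0}$ pairwise non-elementarily-equivalent ultraproducts of matrix algebras, and then the downward L\"owenheim--Skolem theorem to pass to separable elementary subalgebras. Your CRT construction with subsets of primes is just an explicit instantiation of choosing sequences $n_i$ with differing limits of $n_i \bmod d$, and the uniform gap you flag for $\varphi_q^{M_n}$ is exactly the weak-stability fact underlying the paper's proposition on decomposing the identity.
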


Recall we say a unital C$^*$-algebra $ \A $ with center $ \Z( \A ) $ has the Dixmier property if for all $ a \in \A $, the norm-closed convex hull of the unitary orbit of $ \A $ contains a central element:
$$
\text{For all } a \in \A, \qquad \overline{ \conv \{ u a u^* \, : \, u \in \mathcal{U}( \A ) \} } \cap \Z( \A ) \ne \emptyset .
$$
Let $ \mathfrak{M} $ be a von Neumann algebra.  For every self-adjoint $ x \in M $, there is a $ u \in \mathcal{U}( \mathfrak{M} ) $ and a $ z \in \Z( \mathfrak{M} ) $ with
$$
|| \, \tfrac{1}{2} ( x + u x u^* ) - z || \le \frac{ 3 }{ 4 } || x || .
$$
Iterating this result shows that finite von Neumann algebras have the Dixmier property, see Chapter 5, Section 3 of \cite{MR1451139}, or Theorems III.2.5.18 and 19 in \cite{MR2188261}.

\pagebreak

\begin{prop}
Pseudocompact C$^*$-algebras have the Dixmier property.
\end{prop}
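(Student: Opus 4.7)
The plan is to transfer quantitative Dixmier averaging from finite-dimensional C$^*$-algebras to $ \A $ via condition (1) in the definition of pseudocompactness, and then iterate inside $ \A $ to produce an honest central element. The main obstacle, which dictates the whole setup, is that $ \Z( \A ) $ is not a definable set, so we cannot quantify ``$ \exists z \in \Z( \A ) $'' directly inside a sentence; we instead use the commutator gauge
$$
\nu( b ) \;=\; \sup_{ || y || \le 1 } || \, [ b, y ] \, || ,
$$
which in any unital C$^*$-algebra $ \mathfrak{M} $ satisfies $ \nu( b ) \le 2 \, \dist( b, \Z( \mathfrak{M} ) ) $ and is $ 2 $-Lipschitz in $ b $.

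First I would record the following quantitative form of Dixmier's theorem in finite-dimensional C$^*$-algebras: for every $ \varepsilon > 0 $ there is $ N = N( \varepsilon ) \in \mathbb{N} $ such that for every finite-dimensional C$^*$-algebra $ \mathcal{F} $ and every $ a \in \mathcal{F} $ with $ || a || \le 1 $ there exist unitaries $ u_1, \ldots, u_N \in \mathcal{F} $ and scalars $ \lambda_1, \ldots, \lambda_N \ge 0 $ summing to $ 1 $ with $ \dist\bigl( \sum_i \lambda_i u_i a u_i^*, \, \Z( \mathcal{F} ) \bigr) \le \varepsilon $. This follows by iterating the $ 3/4 $-estimate recalled just above on the self-adjoint part of $ a $, then on the skew-adjoint part; unitary invariance of the center-valued trace ensures the second round does not undo the first. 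Because self-adjoints and unitaries are definable sets and scalars from the unit simplex can be quantified over, the expression
$$
\sigma_{ N, \varepsilon } \;=\; \sup_{ || a || \le 1 } \, \inf_{ u_1, \ldots, u_N } \, \inf_{ \lambda_1, \ldots, \lambda_N } \, \sup_{ || y || \le 1 } \, || \, [ \textstyle \sum_i \lambda_i u_i a u_i^*, \, y ] \, ||
$$
is a sentence in the language of C$^*$-algebras with $ \sigma_{ N( \varepsilon ), \varepsilon }^\mathcal{F} \le 2 \varepsilon $ for every finite-dimensional $ \mathcal{F} $. Applying condition (1) to the sentence $ \max( \sigma_{ N( \varepsilon ), \varepsilon } - 2 \varepsilon, \, 0 ) $ then yields $ \sigma_{ N( \varepsilon ), \varepsilon }^\A \le 2 \varepsilon $.

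Second, I would iterate inside $ \A $. Given $ a \in \A $ with $ || a || \le 1 $, set $ b_0 = a $ and $ \varepsilon_k = 2^{ -k } $. At stage $ k \ge 1 $ apply the transferred averaging estimate to $ b_{ k - 1 } $ to produce
$$
b_k \;=\; \sum_i \lambda_i^{(k)} u_i^{(k)} b_{ k - 1 } ( u_i^{(k)} )^* \quad \text{with} \quad \nu( b_k ) \le 2 \varepsilon_k .
$$
By induction each $ b_k $ lies in $ \overline{ \conv \{ u a u^* \, : \, u \in \mathcal{U}( \A ) \} } $. The identity $ u b u^* - b = [ u, b ] u^* $ gives $ || b_k - b_{ k - 1 } || \le \nu( b_{ k - 1 } ) \le 2^{ 2 - k } $, so $ ( b_k ) $ is norm-Cauchy. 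Its limit $ z $ again belongs to $ \overline{ \conv \{ u a u^* \, : \, u \in \mathcal{U}( \A ) \} } $, and continuity of $ \nu $ together with $ \nu( b_k ) \to 0 $ forces $ \nu( z ) = 0 $, whence $ z \in \Z( \A ) $. A homogeneity argument handles $ a $ with $ || a || > 1 $.

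The bulk of the work sits in the first step: producing a uniform bound $ N( \varepsilon ) $ that serves every finite-dimensional C$^*$-algebra and every contraction (self-adjoint or not) simultaneously, so that the transfer can be encoded as a single sentence rather than as a schema parametrized by $ a $ and $ \mathcal{F} $. Once that uniform averaging estimate is in hand, the descent to $ \A $ is an immediate application of the pseudocompact axiom, and the extraction of an honest central element is a routine Cauchy-sequence argument powered by the summable decay of $ \nu( b_k ) $.
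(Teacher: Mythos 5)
Your proof is correct, and its engine is the same as the paper's: iterate the $3/4$ averaging estimate for finite von Neumann algebras to obtain a quantitative Dixmier statement with a bound $N(\varepsilon)$ on the number of unitaries that is uniform over all finite-dimensional C$^*$-algebras, encode it as a sentence, and transfer it to $\A$ by pseudocompactness. Where you genuinely differ is in how the sentence certifies that the average is nearly central. The paper quantifies over central projections $p_1,\dots,p_{D_n}$ and a finite net of scalars and measures the distance from the average to $\sum_j \lambda_j p_j$; you instead measure the commutator gauge $\nu(b)=\sup_{\|y\|\le 1}\|[b,y]\|$ of the average. Your choice buys two things: it sidesteps the question of whether central projections can legitimately be quantified over (the paper itself notes that central elements need not form a definable set, so its displayed formula is informal on this point), and it forces you to make the final step explicit --- since $\nu$ only certifies approximate centrality, you must run the Cauchy iteration $b_k=\sum_i\lambda_i^{(k)}u_i^{(k)}b_{k-1}(u_i^{(k)})^*$ with $\nu(b_k)\to 0$ and $\sum_k\|b_k-b_{k-1}\|<\infty$ to land in $\overline{\conv\,\mathcal{U}(a)}\cap\Z(\A)$, a closing argument the paper leaves implicit after ``we have the same inequality for every pseudocompact C$^*$-algebra.'' You also explicitly reduce general $a$ to its real and imaginary parts via two rounds of averaging, whereas the paper's sentence quantifies only over self-adjoint $x$ and leaves that reduction unstated. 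The only bookkeeping point to keep straight is that the transferred bound is an infimum, so at stage $k$ you only get $\nu(b_k)\le 2\varepsilon_k+\delta_k$ for a slack $\delta_k$ of your choosing; choosing $\delta_k$ summable leaves the Cauchy estimate intact.
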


\begin{proof}
For each $ n $ pick $ K_n \in \mathbb{N} $ satisfying $ ( 3/4 )^{ K_n } < 1/2n $.  Take $ \{ \lambda_1, \ldots, \lambda_{ D_n } \} $ to be a $ \tfrac{ 1 }{ 2n } $ net in the unit disk in the complex numbers.  Now for any finite-dimensional C$^*$-algebra, since it is a finite von Neumann algebra, for all $ n $, and all self-adjoint $ x $, there is a central element $ z $ so that
$$
\sup_x \quad \inf_{ \substack{ u_1, \ldots, u_{ K_n } \\ \text{ unitaries } } } \quad \left| \left| \sum_{ i = 1 }^{ K_n } \frac{ 1 }{ 2^{ K_n } } u_i^* x u_i - z \right| \right| < \frac{ 1 }{ 2n } .
$$
Since we are in a finite-dimensional C$^*$-algebra, the central element $ z $ is a linear combination of central projections, $ z = \sum_1^d \mu_i q_i $, which is within distance $ 1/2n $ to a linear combination of the form $ \sum_1^{ D_n } \lambda_{ i } p_i $ where the $ p_i $ are central projections.  Thus for all $ n $, in every finite-dimensional C$^*$-algebra,
$$
\sup_x \quad \inf_{ \substack{ u_1, \ldots, u_{ K_n } \\ \text{ unitaries } } } \quad \inf_{ \substack{ p_1, \ldots, p_{ D_n } \\ \text{ central projs. } } } \quad \inf_{ \lambda_1, \ldots , \lambda_{ D_n } \in \overline{ \mathbb{D} } } \quad \left| \left| \sum_{ i = 1 }^{ K_n } \frac{ 1 }{ 2^{ K_n } } u_i^* x u_i - \sum_{ j = 1 }^{ D_n } \lambda_j p_j \right| \right| \le \frac{ 1 }{ n } .
$$
In particular, we have the same inequality for every pseudocompact C$^*$-algebra.
\end{proof}

Let $ \A $ be a C$^*$-algebra with center $ \Z( \A ) $.  For each $ a \in \A $ we have the inner derivation induced by $ \A $, $ \Delta_a : x \mapsto a x - x a $.  When $ \A $ is a finite-dimensional C$^*$-algebra, for each $ a \in \A $ we have $ || \Delta_a || = 2 \dist( a, \Z( \A ) ) $, see \cite{MR0326412}.  In \cite{MR0482236}, Archbold defined 
$$
\K( \A ) = \inf \{ K \, : \,  \dist( a, \Z( \A ) ) \le K || \Delta_a || \, \forall a \in \A \} .
$$

\begin{cor}
~
\begin{enumerate}
\item{}
Let $ \A $ be a pseudocompact C$^*$-algebra.  Then $ K( \A ) \le 1 $ and the inner derivations on $ \A $ are point-norm closed in the space of all derivations on $ \A $.
\item{}
If $ \A_i $ is pseudocompact for all $ i \in I $, and $ \u $ is an ultrafilter on $ I $, we have $ \Z( \prod_\u \A_n ) = \prod_\u \Z( \A_n ) $.  Pseudomatrical C$^*$-algebras have trivial center.  The center of a pseudocompact C$^*$-algebra is pseudocompact.
\end{enumerate}
\end{cor}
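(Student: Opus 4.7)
The plan is to derive everything from the Dixmier property established in the preceding proposition, together with the elementary estimate $\|uau^* - a\| = \|[u, a]\| \le \|\Delta_a\|$ for every unitary $u \in \mathcal{U}(\A)$. For part (1), given $a \in \A$ the Dixmier property supplies a central element $z$ in the norm-closed convex hull of $\{uau^* : u \in \mathcal{U}(\A)\}$; since every convex combination $\sum \lambda_i u_i a u_i^*$ lies within $\|\Delta_a\|$ of $a$ by the estimate, so does $z$, giving $\dist(a, \Z(\A)) \le \|\Delta_a\|$, i.e.\ $\K(\A) \le 1$. The point-norm closedness of inner derivations then follows from Archbold's theorem in \cite{MR0482236} that $\K(\A) < \infty$ already forces this --- which is the main obstacle here, since while $\K(\A) \le 1$ readily produces implementing elements bounded modulo the center, extracting a single implementing element for the limit of a point-norm Cauchy net of inner derivations is delicate, and I would invoke rather than reprove it.

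For the first assertion of part (2), the containment $\prod_\u \Z(\A_i) \subseteq \Z(\prod_\u \A_i)$ is immediate. Conversely, if $a = (a_i)_\u$ is central in the ultraproduct then $\Delta_a = 0$, so $\lim_\u \|\Delta_{a_i}\| = 0$. Applying $\K(\A_i) \le 1$ inside each factor, I choose $z_i \in \Z(\A_i)$ with $\|a_i - z_i\| \le 2\|\Delta_{a_i}\|$ (taking $z_i = a_i$ whenever $\|\Delta_{a_i}\| = 0$), so that $\lim_\u \|a_i - z_i\| = 0$ and hence $(a_i)_\u = (z_i)_\u \in \prod_\u \Z(\A_i)$.

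The remaining two assertions combine this with the Keisler--Shelah theorem. If $\A$ is pseudomatricial then some ultrapower $\A^\u$ is $*$-isomorphic to an ultraproduct of matrix algebras $\prod_\V M_{n_j}$ (iterated ultraproducts collapse to a single ultraproduct), so by the preceding paragraph $\Z(\A)^\u \cong \Z(\A^\u) \cong \prod_\V \Z(M_{n_j}) = \prod_\V \mathbb{C} = \mathbb{C}$, which forces $\Z(\A) = \mathbb{C}$. The same argument applied to pseudocompact $\A$ yields $\Z(\A)^\u \cong \prod_\V \Z(\mathcal{F}_j)$ with each $\mathcal{F}_j$ finite-dimensional --- an ultraproduct of finite-dimensional C$^*$-algebras, and hence pseudocompact by the first equivalent definition. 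Since $\Z(\A) \equiv \Z(\A)^\u$ by \L os' theorem and pseudocompactness is preserved under elementary equivalence, $\Z(\A)$ itself is pseudocompact.
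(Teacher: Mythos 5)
Your proposal is correct and follows essentially the same route as the paper: Dixmier property $\Rightarrow \K(\A)\le 1$, point-norm closedness of inner derivations invoked from the literature, $\K(\A)\le 1$ used to show the center of an ultraproduct is the ultraproduct of the centers, and Keisler--Shelah plus \L os' theorem for the remaining claims. The only difference is cosmetic: you prove $\K(\A)\le 1$ directly via the estimate $\|uau^*-a\|\le\|\Delta_a\|$ where the paper cites Archbold, and you spell out the ultraproduct details the paper leaves implicit.
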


\begin{proof}
For unital C$^*$-algebras, the Dixmier property implies that $ K( \A ) \le 1 $.  See Section 2 of \cite{MR0482261}.  By Theorem 5.3 in \cite{MR0215111} this implies that the inner derivations are point-norm closed.  Since $ K( \A ) \le 1 $, those $ a $ which approximately commute with everything in the unit ball are close to the center of $ \A $: $ \dist( a, \Z( \A ) ) \le || \Delta_a || $.  This gives us that the center of the ultraproduct of pseudocompacts is the ultraproduct of the centers.  The final claim follows from the Keisler-Shelah theorem.
\end{proof}

Note that $ \K( \A ) $ can be infinite even for AF-algebras. Example 6.2 in \cite{MR0482236} gives a unital AF algebra $ \A $ with trivial center and a bounded sequence $ a_n \in \A $ with $ \lim_{ n \rightarrow \infty } || \Delta_{ a_n } || = 0 $ but $ \dist( a_n, \Z( \A ) ) = 1 $ for all $ n $. \par

Recall that a separable C$^*$-algebra is an MF algebra if it can be written as an inductive limit of a generalized inductive system of finite-dimensional C$^*$-algebras.  See \cite{MR1437044}, \cite{MR0164248} and \cite{MR0500178}.  By 11.1.5 in \cite{MR2391387}, an algebra is MF if and only if it admits norm microstates.  Using norm-microstates and Theorem 4.1 in \cite{MR1902808}, one gets the following classification: A separable C$^*$-algebra is MF if and only if it is a subalgebra of a pseudocompact C$^*$-algebra.  This result was communicated to the author by Ilijas Farah. \par

The list of properties of pseudocompact C$^*$-algebras is remarkably similar to the properties of the C$^*$-algebras studied in \cite{MR2063121} and \cite{MR3010147}, except those algebras are simple.  David Sherman observed that pseudocompact C$^*$-algebras are elementarily equivalent to their opposite algebras.  Proposition 1.1 in \cite{MR3404668} shows that the von Neumann algebras which are elementarily equivalent to their opposite algebras form an axiomatizable class.  The same argument holds for C$^*$-algebras.  Since finite-dimensional C$^*$-algebras are isomorphic to their opposite algebras, and the class of C$^*$-algebras which are elementarily equivalent to their opposite algebra is an axiomatizable class, the pseudocompact C$^*$-algebras are elementarily equivalent to their opposite algebras. \par

\end{section}

\begin{section}{Properties of Pseudomatricial C$^*$-algebras}

Now we will focus our attention on the smaller class of pseudomatricial C$^*$-algebras.  \par

Infinite-dimensional pseudomatricial C$^*$-algebras are never nuclear, nor elementarily equivalent to a nuclear C$^*$-algebra, see Proposition 7.2.4 in \cite{2016arXiv160208072F}. \par

Pseudomatrical C$^*$-algebras are easily distinguished from other pseudocompact algebras since they have trivial center. \par 

\begin{prop}
A pseudocompact C$^*$-algebra $ \A $ is pseudomatricial if and only if $ \Z( \A ) = \mathbb{C} I $.
\end{prop}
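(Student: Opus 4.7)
The plan is to combine two results from the previous section: the corollary that $\Z(\prod_\U \A_i) = \prod_\U \Z(\A_i)$ whenever the $\A_i$ are pseudocompact, and the Keisler-Shelah theorem promoting elementary equivalence to ultrapower isomorphism. Since finite-dimensional C$^*$-algebras are pseudocompact, the corollary applies to the witnessing ultraproducts in the definitions of both pseudomatricial and pseudocompact.

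For the forward direction, suppose $\A$ is pseudomatricial, so $\A \equiv \prod_\U M_{n_i}$ for some ultrafilter $\U$. Since $\Z(M_{n_i}) = \mathbb{C}I$ and the C$^*$-ultraproduct $\prod_\U \mathbb{C}$ is just $\mathbb{C}$, the corollary gives $\Z(\prod_\U M_{n_i}) = \mathbb{C}I$. By Keisler-Shelah there is an ultrafilter $\V$ with $\A^\V \cong (\prod_\U M_{n_i})^\V$, and a second application of the corollary to this outer ultrapower yields $\Z(\A^\V) = \mathbb{C}I$. Since the diagonal embedding $\A \hookrightarrow \A^\V$ carries $\Z(\A)$ into $\Z(\A^\V)$, I conclude $\Z(\A) = \mathbb{C}I$.

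For the converse, suppose $\A$ is pseudocompact with $\Z(\A) = \mathbb{C}I$, and write $\A \equiv \prod_\U \mathcal{F}_i$ with $\mathcal{F}_i = \bigoplus_{j=1}^{k_i} M_{n_{i,j}}$, so $\Z(\mathcal{F}_i) \cong \mathbb{C}^{k_i}$. By Keisler-Shelah choose $\V$ with $\A^\V \cong (\prod_\U \mathcal{F}_i)^\V$; applying the corollary to both sides (using $\Z(\A) = \mathbb{C}I$ on the left) followed by the diagonal embedding gives $\prod_\U \mathbb{C}^{k_i} = \mathbb{C}$. The key step---which I expect to be the main obstacle---is to extract from this that $k_i = 1$ for $\U$-almost every $i$: if instead $E = \{i : k_i \geq 2\} \in \U$, pick a proper non-zero central projection $p_i \in \Z(\mathcal{F}_i)$ for $i \in E$ and set $p_i = 0$ otherwise, so $(p_i)_\U$ is a proper non-zero projection in $\prod_\U \mathbb{C}^{k_i}$, contradicting triviality. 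Hence $\mathcal{F}_i$ is a single matrix algebra for $\U$-almost every $i$, and $\A \equiv \prod_\U M_{n_{i,1}}$ exhibits $\A$ as pseudomatricial.
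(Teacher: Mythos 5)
Your proof is correct, and it reaches the conclusion by a somewhat different route than the paper. The paper's proof is a one-line axiomatization: it observes that a pseudocompact algebra which is not pseudomatricial must be elementarily equivalent to $\prod_\u F_i$ with $\u$-many $F_i$ non-trivial direct sums, hence $\u$-many $F_i$ having non-trivial central projections, and it records a sentence ($\inf$ over projections of $|\,\|p-I\|-1\,| + |\,\|p\|-1\,| + \sup_{\|x\|\le 1}\|px-xp\|$) whose vanishing detects this. What the paper leaves implicit is why that sentence vanishing in $\A$ yields an honest non-trivial central element rather than merely an approximately central, approximately non-trivial projection; that step secretly uses the earlier corollary that $K(\A)\le 1$ for pseudocompact algebras. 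Your argument makes exactly that ingredient explicit: you invoke the corollary $\Z(\prod_\u \A_i)=\prod_\u\Z(\A_i)$ directly, push everything through Keisler--Shelah and the diagonal embedding (correctly noting that the diagonal embedding carries center into center), and then extract $k_i=1$ on a $\u$-large set by exhibiting a non-trivial projection in $\prod_\u\mathbb{C}^{k_i}$. The trade-off is that the paper's version produces an explicit axiom distinguishing the two classes (useful for the axiomatizability theme of the paper), while yours is longer but self-contained and fills the gap the paper elides; you also re-derive the forward direction (pseudomatricial implies trivial center), which the paper had already established as part of the corollary following the Dixmier property. One small imprecision: at the end you should restrict the index set to $\{i : k_i = 1\}\in\U$ before writing $\A\equiv\prod_\U M_{n_{i,1}}$, since $M_{n_{i,1}}$ is only a summand of $\mathcal{F}_i$ off that set; this is cosmetic because membership in $\U$ is all that matters.
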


\begin{proof}
The pseudocompact C$^*$-algebras which are not pseudomatricial will be elementarily equivalent to $ \prod_\u F_i $ where the $ F_i $ are finite-dimensional and $ \u $-many of the $ F_i $ are non-trivial direct sums of matrix algebras.  In particular, $ \u $-many of the $ F_i $ have non-trivial central projections, axiomatized by
\begin{equation*}
\inf_{ p \text{ a proj. } }  \big| \, || p - I || - 1 \, \big| + \big| \, || p || - 1 \, \big| + \sup_{ || x || \le 1 } || p x - x p || = 0 . \qedhere
\end{equation*}
\end{proof}

\begin{prop} \label{comparablelemma}
In a pseudomatricial C$^*$-algebra, the projections are totally ordered by Murray-von Neumann subequivalence.
\end{prop}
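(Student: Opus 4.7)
The plan mirrors the pattern of the preceding propositions: exhibit a sentence whose vanishing is equivalent to the desired property, check that it evaluates to zero on every matrix algebra, and transfer to $\A$ via pseudomatricialness. Concretely, the candidate sentence I would use is
\[
\sup_{p,\,q\text{ proj.}} \Bigl(\inf_{v \text{ p.i.}} \|v^*v - p\| + \|qvv^* - vv^*\|\Bigr) \cdot \Bigl(\inf_{w \text{ p.i.}} \|w^*w - q\| + \|pww^* - ww^*\|\Bigr) = 0.
\]
Using the product-as-\emph{or} convention together with the fact that projections and partial isometries form definable sets (Proposition 2.1 in \cite{MR1902808}), this vanishes precisely when, for every pair of projections $p, q$, one of $p \precsim q$ or $q \precsim p$ holds. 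The condition $qvv^* = vv^*$ encodes $vv^* \le q$ since both sides are projections.

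In a matrix algebra $M_n$, projections are classified up to Murray-von Neumann equivalence by their rank, so of any two projections the one of smaller rank is subequivalent to the other via a partial isometry built from orthonormal bases of the ranges. Thus the sentence evaluates to zero on every $M_n$, hence by \L os' theorem on every ultraproduct of matrix algebras, and finally, by condition (3) of the definition of pseudomatricial, on $\A$ itself.

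The main obstacle, as often in this paper, is lifting approximate data to exact data: one must know that if the double infimum above vanishes in $\A$, then an honest partial isometry realizing subequivalence exists in $\A$, not merely a Cauchy sequence of approximate implementers. This follows from the weak stability of the partial-isometry relations together with the fact that prescribing the source projection and confining the range projection inside another projection is itself weakly stable (one can perturb an approximate partial isometry first to an exact one, then unitarily cut down to have the prescribed source, and finally compress into $q$ via a small-norm perturbation). These are precisely the kinds of standard manipulations implicit in the axiomatizability arguments used for the corners and minimal-projection propositions above, so no new technique is needed beyond what the paper has already invoked.
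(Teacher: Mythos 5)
Your proposal is correct and follows essentially the same route as the paper: the paper's proof consists precisely of the sentence $\sup_{ p, q \text{ projs } } \inf_{ x \text{ p.i. } }  ( \, || p - x^* x || + || ( x x^* ) q - x x^* || \, ) \cdot ( \, || q - x^* x || + || ( x x^* ) p - x x^* || \, ) = 0$, which differs from yours only in placing a single infimum outside the product rather than one inside each factor (the zero sets coincide, so both work). Your additional remarks on lifting approximate partial isometries via weak stability make explicit what the paper leaves implicit.
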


\begin{proof}
The property that ``all projections are comparable'' is axiomatized by the following sentence:
\[
\sup_{ p, q \text{ projs } } \inf_{ x \text{ partial isom. } }  ( \, || p - x^* x || + || ( x x^* ) q - x x^* || \, ) \cdot ( \, || q - x^* x || + || ( x x^* ) p - x x^* || \, ) = 0 . \qedhere
\]
\end{proof}

\begin{cor}
If $ \A $ is a pseudomatricial C$^*$-algebra, then $ K_0( \A ) $ is totally ordered and has successors and predecessors.  If $ \A $ is separable, then $ K_0( \A ) $ is a countable abelian totally ordered group, so it is a dimension group (see \cite{MR564479}).
\end{cor}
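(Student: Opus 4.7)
The plan is to deduce the total order on $K_0(\A)$, produce successors and predecessors, and then in the separable case invoke a characterization of dimension groups. The starting observation is that since pseudomatricial C$^*$-algebras are closed under tensoring with matrix algebras, every $M_n(\A)$ is pseudomatricial, and so by Proposition~\ref{comparablelemma} has its projections totally ordered by Murray--von Neumann subequivalence $\lesssim$. I also use that pseudocompact C$^*$-algebras are stably finite and of stable rank one, so Rieffel's cancellation theorem gives that the Murray--von Neumann semigroup $V(\A)$ of projections in $\bigcup_n M_n(\A)$ embeds in $K_0(\A)$ as its positive cone, with $[p]\le[q]$ if and only if $p\lesssim q$.

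For the total order, given $x\in K_0(\A)$ write $x=[p]-[q]$ with $p,q$ projections in a common $M_n(\A)$; comparability in $M_n(\A)$ gives $p\lesssim q$ or $q\lesssim p$, hence $x\le 0$ or $x\ge 0$.

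For successors and predecessors, first note that all minimal projections in $\A$ lie in a single Murray--von Neumann class: if $p,q$ are minimal and $p\lesssim q$ via a partial isometry $v$, then $vv^*$ is a nonzero subprojection of the minimal projection $q$, so $vv^*=q$ and $p\sim q$. Write $[e]\in K_0(\A)$ for this common class. A minimal projection of $\A$ stays minimal in $M_n(\A)$ because $(e\oplus 0)M_n(\A)(e\oplus 0)=e\A e=\mathbb{C} e$, and the same argument inside the pseudomatricial algebra $M_n(\A)$ shows every minimal projection in $M_n(\A)$ also has class $[e]$. By the earlier result that every nonzero projection in a pseudocompact C$^*$-algebra dominates a minimal one, every nonzero projection in $M_n(\A)$ has class at least $[e]$, so $[e]$ is the minimum of $K_0(\A)^+\setminus\{0\}$. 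Then $x+[e]$ is the successor of $x$ and $x-[e]$ the predecessor.

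If $\A$ is separable then so is each $M_n(\A)$, so there are only countably many Murray--von Neumann classes of projections and $K_0(\A)$ is countable. A totally ordered abelian group is automatically unperforated (if $x<0$ then $nx<0$) and satisfies Riesz interpolation (take $c=\max(a_1,a_2)$), so by the Effros--Handelman--Shen characterization from \cite{MR564479}, $K_0(\A)$ is a dimension group. The main subtlety is the translation from comparability of projections in $M_n(\A)$ to a total order on $K_0$-classes, which uses cancellation and hence both stable rank one and stable finiteness from Section~4.
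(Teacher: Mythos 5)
Your proof is correct and follows essentially the same route as the paper: the paper's entire argument is the one-line observation that for a minimal projection $m$, the classes $g\pm[m]_0$ are the successor and predecessor of $g$, with the total order coming from the preceding comparability proposition. You have simply supplied the supporting details the paper leaves implicit (cancellation from stable rank one, uniqueness of the minimal Murray--von Neumann class across all $M_n(\A)$, countability, and the Effros--Handelman--Shen criterion), all of which check out.
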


\begin{proof}
If $ m $ is a (non-zero) minimal projection then $ g - [ m ]_0 $ and $ g + [ m ]_0 $ are the greatest element less than $ g $ and the least element greater than $ g $, respectively.
\end{proof}

\pagebreak

\begin{cor}
Let $ \A $ be an infinite-dimensional pseudomatricial C$^*$-algebra.  For every minimal projection $ p $ and any tracial state $ \tr $, $ \tr( p ) = 0 $.  In particular, infinite-dimensional pseudomatrical algebras have a non-faithful tracial state, and they are not simple.
\end{cor}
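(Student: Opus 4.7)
The plan is to show that $\tr(m) \le 1/d$ for every integer $d \ge 2$, forcing $\tr(m) = 0$. Two preliminary observations are needed. First, in a pseudomatricial C$^*$-algebra any two minimal projections $m, m'$ are Murray--von Neumann equivalent: by Proposition~\ref{comparablelemma} one may assume $m \precsim m'$ via a partial isometry $v$ with $v^*v = m$ and $vv^* \le m'$, and since $m'$ is minimal and $vv^* \ne 0$, the property $m' \A m' = \mathbb{C} m'$ forces $vv^* = m'$. Second, combining this with the earlier proposition that every non-zero projection dominates a minimal projection, every minimal projection is Murray--von Neumann subequivalent to every non-zero projection, so $\tr(m) \le \tr(q)$ for every tracial state $\tr$ and every non-zero projection $q$.

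Next I would invoke the decomposition result: for each $d \ge 2$ there are pairwise orthogonal, Murray--von Neumann equivalent projections $p_1, \ldots, p_d$ and an orthogonal projection $r$ equal to a sum of at most $d-1$ minimal projections, with $I = p_1 + \cdots + p_d + r$. Applying $\tr$ gives $1 = d\,\tr(p_1) + k\,\tr(m)$ for some $k \in \{0, \ldots, d-1\}$, using the first observation that all minimal projections share the common trace value $\tr(m)$.

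The main obstacle is to rule out $p_1 = 0$, since only then does $\tr(p_1) \ge \tr(m)$ and the estimate $1 \ge (d+k)\,\tr(m) \ge d\,\tr(m)$ follow. This is where infinite-dimensionality enters. If every $p_i$ vanished, then $I$ would be a finite orthogonal sum of pairwise Murray--von Neumann equivalent minimal projections $m_1, \ldots, m_k$; assembling a system of matrix units $\{e_{ij}\}$ with $e_{ii} = m_i$, the minimality relation $m_1 \A m_1 = \mathbb{C} m_1$ forces $e_{1i} a e_{j1} \in \mathbb{C} m_1$ for every $a \in \A$, so $a$ is a complex linear combination of the $e_{ij}$. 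This identifies $\A$ with $M_k$, contradicting infinite-dimensionality. Letting $d \to \infty$ then gives $\tr(m) = 0$.

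Finally, the consequences: $\tr$ is not faithful, since $m = m^*m \ne 0$ while $\tr(m^*m) = 0$. For non-simplicity, the tracial ideal $N_\tr = \{a \in \A : \tr(a^*a) = 0\}$ is a proper, non-zero, closed, two-sided ideal (closedness and the left ideal property are routine; two-sidedness uses the trace property together with Cauchy--Schwarz), containing $m$ but not $I$, so $\A$ is not simple.
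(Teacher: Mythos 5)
Your proof is correct, but the main estimate travels a different road than the paper's. The paper argues directly: $I$ dominates a minimal projection $m_1$; since $\A$ is infinite-dimensional, $I - m_1 \ne 0$, so it dominates a minimal $m_2 \perp m_1$; iterating produces $n$ orthogonal minimal projections for every $n$, and since all minimal projections are equivalent, $1 = \tr(I) \ge n\,\tr(m_1)$, forcing $\tr(m_1) = 0$. You instead invoke the heavier ``approximate $1/d$-th'' decomposition $I = p_1 + \cdots + p_d + r$ and must then rule out the degenerate case $p_i = 0$, which you do by showing that if $I$ were a finite sum of $k$ equivalent minimal projections then the matrix-unit construction and $m_1 \A m_1 = \mathbb{C} m_1$ would force $\A \cong M_k$. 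That finite-dimensionality argument is exactly the fact the paper's proof silently relies on when it asserts $I - m_1 - \cdots - m_n \ne 0$, so your write-up is in some sense more complete even though it is less economical: the paper needs only the propositions that every non-zero projection dominates a minimal one and that projections are comparable, whereas you additionally lean on the decomposition proposition. Your preliminary lemma that any two minimal projections are equivalent (via comparability plus $vv^* \in m'\A m' = \mathbb{C}m'$) supplies a detail the paper states without proof, and your closing argument for non-simplicity via the closed two-sided ideal $N_{\tr} = \{a : \tr(a^*a) = 0\}$, which contains $m$ but not $I$, is the standard and correct justification of the paper's unproved final sentence.
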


\begin{proof}
The identity $ I $ dominates a minimal projection $ m_1 $.  Then since $ \A $ is infinite-dimensional, $ I - m_1 $ is non-zero and dominates another minimal projection $ m_2 $ orthogonal to $ m_1 $.  Continuing in this way, for all $ n \in \mathbb{N} $ we can iteratively find $ n $ orthogonal minimal projections $ m_i $.  Since all minimal projections are Murray-von Neumann equivalent, $ \tr( m_i ) = \tr( m_j ) $ for all $ i, j \in \mathbb{N} $.  Thus
$$
1 = \tr( I ) \ge \tr \left( \sum_{ i = 1 }^n m_i \right) = n \tr( m_1 ) .
$$
So for all $ n $, the trace of a minimal projection is less than $ 1/n $, so minimal projections must have trace zero.
\end{proof}

Minimal projections in pseudomatrical C$^*$-algebras behave like infinitesimal elements: they are non-zero and have norm one, but they are subequivalent to every projection with non-zero trace.

\begin{prop}
Pseudomatrical C$^*$-algebras have a unique tracial state.
\end{prop}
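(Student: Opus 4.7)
The plan is to package together two results already in hand: pseudocompact C$^*$-algebras have the Dixmier property, and by the previous proposition a pseudocompact C$^*$-algebra is pseudomatricial exactly when its center is $ \mathbb{C} I $.  First I would fix any tracial state $ \tau $ on $ \A $, which exists because $ \A $ is pseudocompact and hence tracial.  By the trace identity, $ \tau( u a u^* ) = \tau( a ) $ for every $ a \in \A $ and every $ u \in \mathcal{U}( \A ) $, so by linearity and norm continuity $ \tau $ takes the constant value $ \tau( a ) $ on the entire norm-closed convex hull $ \overline{ \conv } \{ u a u^* : u \in \mathcal{U}( \A ) \} $.

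Next I would invoke the Dixmier property to produce at least one element $ c_a $ of this closed convex hull that lies in $ \Z( \A ) $.  Since $ \A $ is pseudomatricial, $ \Z( \A ) = \mathbb{C} I $, so $ c_a = \lambda_a I $ for some scalar $ \lambda_a \in \mathbb{C} $.  Substituting into the invariance above gives $ \tau( a ) = \tau( \lambda_a I ) = \lambda_a $.  In particular, $ \lambda_a $ is independent of which tracial state we began with, so if $ \tau_1 $ and $ \tau_2 $ are two tracial states on $ \A $ then $ \tau_1( a ) = \lambda_a = \tau_2( a ) $ for every $ a \in \A $.

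I do not expect any serious obstacle; everything of substance has been done in the preceding two propositions, and what remains is the standard folklore consequence that Dixmier property plus trivial center forces trace uniqueness (see, e.g., Section 2 of \cite{MR0482261}).  The only mildly subtle point is that the Dixmier intersection $ \overline{ \conv }\{ u a u^* : u \in \mathcal{U}( \A ) \} \cap \Z( \A ) $ is not a priori a singleton, but this does not matter: any element of the intersection is a scalar multiple of the identity that is forced to equal $ \tau( a ) $, which both pins down the trace and, as a byproduct, shows the intersection is the single point $ \{ \tau( a ) I \} $.
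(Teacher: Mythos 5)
Your proof is correct, but it takes a genuinely different route from the paper's. The paper argues via real rank zero: the span of the projections is dense, so a tracial state is determined by its values on projections, and those values are pinned down combinatorially by counting orthogonal Murray--von Neumann equivalent copies and approximate $d^{\text{th}}$ parts of each projection (using the earlier decomposition of the identity and cancellation from stable finiteness). You instead combine the Dixmier property for pseudocompact C$^*$-algebras with the characterization of pseudomatricial algebras as those with center $\mathbb{C}I$: any element of $\overline{\conv}\{uau^*\} \cap \Z(\A)$ is a scalar $\lambda_a I$, and unitary invariance plus norm continuity of any tracial state $\tau$ forces $\tau(a) = \lambda_a$, independent of $\tau$. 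Both ingredients you use are established before this proposition in the paper, so there is no circularity, and your handling of the a priori non-uniqueness of the Dixmier intersection is right --- indeed you get the strong Dixmier property as a byproduct, which the paper proves as a separate proposition immediately afterward (using the unique trace, i.e., running the implication in the other direction). What the paper's approach buys is more quantitative information: it exhibits how the trace of each projection is computed from the order structure on projections, which feeds directly into the $K_0$ analysis later in the section. Your approach is shorter and is the standard ``Dixmier property plus trivial center implies at most one trace'' argument, at the cost of leaning on the averaging proposition rather than on the projection structure.
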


\begin{proof}
Since a pseudomatricial C$^*$-algebra has real rank zero, the span of the projections is dense.  Thus the trace is determined by its value on projections, and the trace on each projection is determined by how many orthogonal Murray-von Neumann equivalent copies of the projection (or approximate fractions of the projection) one can find.

More precisely, the maximum number of orthogonal Murray-von Neumann equivalent copies of $ p $ is well-defined by finiteness and cancellation.  If there are $ n $ orthogonal Murray-von Neumann equivalent copies of $ p $, then $ \tr( p ) \le 1/n $.  Now if $ q $ is an approximate $ d^{\text{th} } $ of $ p $ then $ \tr( q ) = \tr( p ) / d $.  Repeating the process with smaller and smaller fractions of $ p $ determines the trace of $ p $ as precisely as we wish.
\end{proof}

Note that the trace ideal $ J $ of a separable, infinite-dimensional pseudomatricial C$^*$-algebra $ \A $ contains a copy of the compacts (generated by any countable collection of orthogonal minimal projections -- see 7.1.2 in \cite{MR1656031}), and the quotient $ \A / J $ has matrix units of all orders. 

\begin{prop}
Pseudomatrical algebras have the strong Dixmier property, i.e., for all $ a \in \A $, $ \overline{ \conv \big( \, \U( a ) \, \big) } \cap \Z( \A ) $ is a singleton.
\end{prop}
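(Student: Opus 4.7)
The plan is to combine the Dixmier property (established in the previous section) with the uniqueness of the tracial state, using the trivial center of a pseudomatricial algebra to pin down the scalar.

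First, by the Dixmier property for pseudocompact C$^*$-algebras, for each $a \in \A$ the set $\overline{\conv \{ u a u^* : u \in \U(\A) \}} \cap \Z(\A)$ is non-empty. Second, since $\A$ is pseudomatricial, the characterization of pseudomatricial C$^*$-algebras gives $\Z(\A) = \mathbb{C} I$, so any element of this intersection is of the form $\lambda I$ for some $\lambda \in \mathbb{C}$. It remains to show $\lambda$ is uniquely determined.

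For this I would use the unique tracial state $\tau$. By the trace property, $\tau(u a u^*) = \tau(a)$ for every unitary $u \in \U(\A)$, so $\tau$ takes the constant value $\tau(a)$ on the unitary orbit of $a$. By linearity, $\tau$ also takes value $\tau(a)$ on the convex hull $\conv \{ u a u^* : u \in \U(\A) \}$, and by norm-continuity of $\tau$ this extends to the closed convex hull. In particular, if $\lambda I$ lies in this closure, then $\lambda = \tau(\lambda I) = \tau(a)$. Hence the intersection is exactly the singleton $\{ \tau(a) I \}$.

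The argument is essentially immediate once the three inputs are in place, so I do not expect a real obstacle; the only mild point to note is that we must use the \emph{unique} tracial state, as that is what forces $\Z(\A) = \mathbb{C} I$ to produce a well-defined scalar. If multiple traces existed, the Dixmier element in the center would still exist but its scalar value could in principle depend on which trace one evaluated (though in our setting it does not, since the center is already one-dimensional).
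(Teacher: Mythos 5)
Your proposal is correct and follows essentially the same route as the paper: non-emptiness from the Dixmier property, $\Z(\A) = \mathbb{C} I$, and the observation that the tracial state is constant on $\overline{\conv(\,\U(a)\,)}$, forcing the central element to be $\tr(a) I$. The only cosmetic difference is that the paper leaves the non-emptiness step implicit (it follows from the earlier Dixmier proposition), and note that uniqueness of the trace is not actually needed here --- any single tracial state already pins down the scalar once the center is one-dimensional.
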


\begin{proof}
Note if $ z \in \overline{ \conv \big( \, \mathcal{U}( a ) \, \big) } $, then $ \tr( z ) = \tr( a ) $ .  Thus if $ z $ is in the center of a pseudomatricial C$^*$-algebra, $ \mathbb{C} I $, $ z = \tr(a) I $ is unique.
\end{proof}

\begin{cor}
If $ J $ is a (closed, two-sided) ideal of a pseudomatricial C$^*$-algebra $ \A $, and there is an $ x \in J $ with $ \tr( x ) \ne 0 $, then $ J = \A $.
\end{cor}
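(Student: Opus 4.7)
The plan is to exploit the strong Dixmier property from the previous proposition together with the standard fact that a (closed, two-sided) ideal is invariant under unitary conjugation. Since $J$ is two-sided, for every unitary $u \in \U(\A)$ we have $u x u^* \in J$, so the entire unitary orbit $\U(x) = \{u x u^* : u \in \U(\A)\}$ lies in $J$. Because $J$ is norm-closed and convex, it also contains $\overline{\conv(\U(x))}$.

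First I would apply the strong Dixmier property to $x$: by the previous proposition, $\overline{\conv(\U(x))} \cap \Z(\A)$ is a singleton, and by the proof of that proposition, the unique central element in this closed convex hull is $\tr(x) I$ (using the trace invariance $\tr(z) = \tr(x)$ for any $z \in \overline{\conv(\U(x))}$ and the fact that $\Z(\A) = \mathbb{C} I$ by the pseudomatricial hypothesis). Hence $\tr(x) I \in \overline{\conv(\U(x))} \subseteq J$.

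Since $\tr(x) \ne 0$ by hypothesis, dividing by the scalar $\tr(x)$ gives $I \in J$, and any ideal containing the unit must equal $\A$.

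There is essentially no obstacle here beyond checking that the ideal is indeed invariant under unitary conjugation and closed under the operations of taking convex combinations and norm-limits — all three are immediate from the definition of a closed two-sided ideal. The real work was done in proving the strong Dixmier property; this corollary is a direct application.
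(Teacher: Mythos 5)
Your proposal is correct and is essentially identical to the paper's own (one-line) proof: both use the fact that a closed two-sided ideal contains the norm-closed convex hull of the unitary orbit of each of its elements, then invoke the strong Dixmier property to conclude $\tr(x) I \in J$, hence $I \in J$. Your write-up just makes explicit the routine checks (invariance under conjugation, convexity, closedness) that the paper leaves implicit.
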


\begin{proof}
Since $ x \in J $ is a closed, two-sided ideal, $ \tr( x ) I \in \overline{ \conv \big( \, \mathcal{U}( x ) \, \big) }^{ || \cdot || } \subseteq J $.
\end{proof}

\pagebreak

\begin{prop}
Pseudomatricial C$^*$-algebras have strict comparison of projections.  That is, if $ \tr( q ) < \tr( p ) $ then $ q $ is equivalent to a proper subprojection of $ p $.
\end{prop}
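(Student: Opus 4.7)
The plan is to reduce the claim to the two earlier facts that the trace is unique (hence well-defined and positive) and that projections in a pseudomatricial C$^*$-algebra are totally ordered under Murray–von Neumann subequivalence (Proposition \ref{comparablelemma}). Once those are in hand, the statement should follow from an easy trichotomy argument plus the standard observation that subequivalence is monotone for the trace.

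More precisely, I would proceed as follows. First, by Proposition \ref{comparablelemma}, either $q \lesssim p$ or $p \lesssim q$. Suppose for contradiction that $p \lesssim q$; then there is a projection $p' \le q$ with $p \sim p'$. Because $\tr$ is a tracial state, $\tr(p) = \tr(p')$, and because $\tr$ is positive with $q - p' \ge 0$, we have $\tr(p') \le \tr(q)$. Combining these gives $\tr(p) \le \tr(q)$, contradicting the hypothesis $\tr(q) < \tr(p)$. Hence $q \lesssim p$, and there is some projection $p'' \le p$ with $q \sim p''$.

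It remains to show that $p''$ can be chosen strictly below $p$. But this is automatic: if $p'' = p$, then $q \sim p$ would force $\tr(q) = \tr(p)$, again contradicting $\tr(q) < \tr(p)$. Therefore $p'' < p$ is a proper subprojection equivalent to $q$, which is exactly strict comparison.

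There is really no obstacle here; the content of the argument is entirely packaged into Proposition \ref{comparablelemma} (the total ordering of projections) and the uniqueness of the trace, so the proof is only a couple of lines. The one thing to be careful about is not to inadvertently assert that $q$ itself is a subprojection of $p$ — only that $q$ is equivalent to one — and to note that the possibility $q \sim p$ is excluded precisely by the strict inequality on traces.
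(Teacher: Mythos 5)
Your proof is correct, but it takes a genuinely different route from the paper's. The paper argues by transfer: it verifies strict comparison directly in $M_n$, then shows the property passes to ultraproducts (via representative sequences of projections and partial isometries) and to ultraroots, so it holds in every pseudomatricial C$^*$-algebra. You instead derive the result as a short corollary of Proposition \ref{comparablelemma} (total ordering of projections under Murray--von Neumann subequivalence) together with positivity and the tracial property of the state: trichotomy rules out $p \preceq q$, and the strict inequality of traces rules out $q \sim p$. Your argument is cleaner and makes the logical dependencies explicit, and your closing caveats (that $q$ is only equivalent to, not equal to, a subprojection of $p$, and that $q\sim p$ is excluded by strictness) are exactly the right points to flag. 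The trade-off is generality: immediately after this proposition the paper remarks that the same ultraproduct argument gives strict comparison for \emph{pseudocompact} algebras (where one assumes $\tr(q)<\tr(p)$ for all traces $\tr$). Your argument does not extend to that setting, since total ordering of projections fails already for $M_2 \oplus M_3$ and hence for general pseudocompact algebras, whereas the paper's transfer argument adapts with only minor changes.
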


\begin{proof}
Obviously this property holds in $ M_n $ since the trace of a projection is the rank of the projection divided by $ n $, and we have that $ q $ is subequivalent to $ p $ if and only $ \text{rank}( q ) \le \text{rank}( p ) $.  Suppose each $ \A_i $ has strict comparison with respect to $ \tr_i $.  Then $ \prod_\U \A_i $ has strict comparison of projections with respect to $ \tr = \lim_\U \tr_i $:  Suppose $ p $ and $ q $ are projections in $ \prod_\U \A_i $ with $ \tr( p ) < \tr( q ) $.  Without loss of generality, there are representative sequences of projections $ p = ( p_i )_\U, q = ( q_i )_\U $.  Since $ \tr( p ) = \lim_\U \tr_i ( p_i ) < \lim_\U \tr_i ( q_i ) = \tr( q ) $,
$$
S = \{ i \, | \, \tr_i( p_i ) < \tr_i( q_i ) \} \in \U .
$$
Thus for $ i \in S $, $ p_i $ is properly Murray-von Neumann equivalent to a subprojection $ q_i' $ of $ q_i $ via some partial isometry $ v_i $.  Considering the partial isometry $ ( v_i )_\U $, we see that $ p $ is properly Murray-von Neumann subequivalent to $ q $.  \par

Suppose $ \A^\U $ has strict comparison of projections with respect to $ \lim_\U \tr $ for some tracial state $ \tr $ on $ \A $.  Then $ \A $ has strict comparison of projection with respect to $ \tr $:  Suppose $ p, q $ are projections in $ \A $ with $ \tr( p ) < \tr( q ) $.  Then $ P = ( p )_\U $ and $ Q = ( q )_\U $ are projections in $ \A^\U $ with $ \lim_\U \tr ( p )_\u < \lim_\U \tr( q )_\U $.  So by assumption, $ P $ is properly Murray-von Neumann subequivalent to $ Q $.   So there is a partial isometry $ V $ with $ V^* V =  P $ and $ V V^* = Q' \le Q $.  Without loss of generality, representative sequences of projections $ q_i' \le q $ so $ Q' = ( q_i' )_\U $, and we can find partial isometries $ v_i $ so $ V = ( v_i )_\U $ and on large set of indices, $ v_i^* v_i = p $ and $ v_i v_i^* = q_i' \le q $.  So $ p $ is Murray-von Neumann subequivalent to $ q $.
\end{proof}

The converse is not true because there are non-zero trace zero projections. \par

The same argument shows that pseudocompact C$^*$-algebras have strict comparison of projections: if $ p $ and $ q $ are projections and for all traces $ \tr $, $ \tr( q ) < \tr( p ) $, then $ q $ is Murray-von Neumann subequivalent to $ p $. \par

In $ M_n $, Murray-von Neumann equivalence, unitary equivalence, and homotopy equivalence of projections are all equivalent.  The same property holds for pseudomatricial C$^*$-algebras:

\begin{prop}
In a pseudomatricial algebra, Murray-von Neumann equivalence, unitary equivalence, and homotopy equivalence are equivalent.
\end{prop}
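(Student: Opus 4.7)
The plan is to close the cycle: Murray-von Neumann equivalence implies unitary equivalence, unitary equivalence implies homotopy equivalence, and homotopy equivalence implies Murray-von Neumann equivalence.

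The easy two implications come first. If $u p u^* = q$, the earlier proposition that every unitary in a pseudocompact C$^*$-algebra is homotopic to the identity yields a norm-continuous path $u_t$ from $I$ to $u$, so $t \mapsto u_t p u_t^*$ is a path of projections from $p$ to $q$. Conversely, a continuous path of projections from $p$ to $q$ is uniformly continuous on $[0,1]$, so I can subdivide it into finitely many consecutive pairs at norm-distance less than one; the classical fact that projections within distance one are unitarily (hence Murray-von Neumann) equivalent, composed across the subdivision, yields $p \sim q$.

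The main content is showing that Murray-von Neumann equivalent projections are unitarily equivalent. My plan is to axiomatize the statement ``every partial isometry extends to a unitary'' via
$$
\varphi = \sup_{v \text{ partial isom.}} \inf_{u \text{ unitary}} \| u v^* v - v \|.
$$
In $M_n$, $\varphi$ evaluates to zero because $I - v^* v$ and $I - v v^*$ have equal rank and hence are joined by a partial isometry $w$, so $u := v + w$ is a unitary with $u p = v$ for $p = v^*v$. Since a pseudomatricial C$^*$-algebra $\A$ is elementarily equivalent to an ultraproduct of matrix algebras, $\varphi^\A = 0$. Given Murray-von Neumann equivalent projections $p$, $q$ via a partial isometry $v$, this furnishes unitaries $u$ with $\| u p - v \|$ arbitrarily small, and a short calculation using $v p = v$ and $v v^* = q$ bounds $\| u p u^* - q \|$ by twice that error.

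The hard part will be upgrading this approximate conjugation to an exact one, since $\varphi = 0$ gives only an infimum rather than an attained solution. The resolution is to choose the error below $1/2$, so that $u p u^*$ lies within norm-distance less than one of $q$; such projections are unitarily equivalent by the classical distance lemma, and composing that unitary with $u$ produces an exact unitary conjugating $p$ to $q$, closing the cycle.
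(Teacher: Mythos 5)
Your argument is correct, and for the one implication with real content it takes a genuinely different route from the paper. The paper proves that ``Murray--von Neumann equivalence implies unitary equivalence'' is axiomatizable by the semantic criterion (closure under isomorphism, ultraproducts, and ultraroots), verifying the ultraproduct direction by lifting representative sequences of projections and partial isometries and the ultraroot direction with the distance-less-than-one lemma. You instead write down an explicit sentence, $\sup_{v}\inf_{u} \| u v^* v - v \|$ over partial isometries and unitaries (both definable sets, as the paper notes), check it vanishes on $M_n$ by the rank count showing $I - v^*v \sim I - vv^*$, transfer it to $\A$ directly from condition (1) of the definition of pseudomatriciality, and then stabilize the approximate conjugation $\| u p u^* - q \| \le 2\| u p - v \| < 1$ with the same distance-less-than-one lemma. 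Your version buys an explicit axiom in the spirit of the paper's other propositions, and since every partial isometry in a general finite-dimensional C$^*$-algebra also extends to a unitary (argue summand by summand), it yields the statement for all pseudocompact algebras, not just pseudomatricial ones; the paper's version illustrates the softer ultraproduct/ultraroot technique without having to guess a sentence. For the other implications you and the paper essentially agree: unitary equivalence implies homotopy equivalence via a path $u_t p u_t^*$ coming from the triviality of $\pi_0$ of the unitary group (the paper routes this through density of exponential unitaries), and homotopy equivalence implies Murray--von Neumann equivalence by the standard subdivision argument, which the paper simply declares clear and you spell out.
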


\begin{proof}
It is clear unitary equivalence implies Murray-von Neumann equivalence.  We show that ``Murray-von Neumann equivalence implies unitary equivalence'' is an axiomatizable property.  Of course this property is preserved by isomorphism.  Suppose in $ \A_i $ Murray-von Neumann equivalence implies unitary equivalence.  Let $ p \sim q $ be Murray-von Neumann equivalent projections in $ \prod_\U \A_i $.  Without loss of generality we have that $ p = ( p_i )_\U $, $ q = ( q_i )_\U $, with $ p_i $ and $ q_i $ projections, and a partial isometry $ v = ( v_i )_\U $ so that
$$
S = \{ i \, | \, p_i = v_i^* v_i \text{ and } q_i = v_i v_i^* \} \text{ is in } \U .
$$
Thus for $ i \in S $, $ p_i $ and $ q_i $ are unitarily equivalent in $ \A_i $, so there is a unitary $ u_i \in \A_i $ so that $ u_i^* p_i u_i = q_i $.  For $ i \notin S $ let $ u_i = I_i $.  Letting $ u = ( u_i )_\U $ we have that $ u $ is a unitary and $ u^* p u = q $.  Thus $ p $ and $ q $ are are unitarily equivalent in $ \prod_\U \A_i $.

Suppose Murray-von Neumann equivalence implies unitary equivalence in $ \A^\U $.   Suppose $ p $ and $ q $ are Murray-von Neumann equivalent projections in $ \A $ via a partial isometry $ v $, then $ ( p )_\U $ and $ ( q )_\U $ are Murray-von Neumann equivalent in $ \A^\U $ via $ ( v )_\U $.  Then there is a unitary $ u \in \A^\U $ so $ u^* ( p )_\U u = ( q )_\U $.  Without loss of generality $ u = ( u_i )_\U $ where the $ u_i $ are unitaries in $ \A $ and $ ( u_i p u_i^* )_\U = ( q )_\U $, then
$$
S = \{ i \, : \,  || u_i p u_i^* - q || < 1 \} \text{ is in } \U .
$$
If two projections are distance less than one apart, they are unitarily equivalent (See e.g. 2.2.4 and 2.2.6 in \cite{MR1656031}).  Thus for $ i \in S $, $ u_i p u_i^* $ is unitarily equivalent to $ q $.  So $ p $ and $ q $ are unitarily equivalent.

It is clear that homotopy equivalence implies Murray-von Neumann equivalence.  By the proof of Proposition 5.1, the exponential unitaries are dense in the unitary group of a pseudomatricial C$^*$-algebra.  Since unitary equivalence via a unitary in the connected component of the identity implies homotopy equivalence (see 2.2.6 in \cite{MR1656031}), we see that unitary equivalence is equivalent to homotopy equivalence in pseudocompact C$^*$-algebras.
\end{proof}

Notice that if $ \A $ is pseudomatricial, then for any natural number $ n $, $ M_n( \A  ) $ is pseudomatricial.  The induced trace on $ M_n( \A ) $ from $ \A $ is just $ n $ times the unique tracial state on $ M_n( \A  ) $.  Elements of $ K_0( \A ) $ are of the form $ [ p ]_0 - [ q ]_0 $ where $ p, q $ are projections in some $ M_n( \A ) $, so either $ q \preceq p $ or vice-versa.  In the first case, $ p $ is the orthogonal sum of a projection $ p' $ and a projection $ q' $ which is Murray-von Neumann equivalent to $ q $.  Thus $ [ p ]_0 - [ q ]_0 = [ p' + q' ]_0 - [ q ]_0 = [ p' ]_0 + [ q' ]_0 - [ q ]_0 = [ p' ]_0 $.  So $ K_0( \A ) = \{ \pm [ p ]_0 \, | \, p \in M_n( \A ) \} $. \par

The $ K $-theory of ultraproducts has been studied in \cite{MR2180648}: the $ K_0 $ group is a sort of graded ultraproduct of the $ K_0 $ groups of the components, since a projection in $ K_0( \prod_\U \A_i ) $ is in some $ M_n( \prod_\U \A_i ) = \prod_\U M_n( A_i ) $.  In other words, the matrix amplifications need to be bounded. \par

Let $ G $ be a (totally) ordered additive group, $ g, h \in G $.  We let $ | g | $ denote the element of $ \{ g, -g \} $ which is greater than or equal to the zero element.  Recall $ g $ is Archimedean less than $ h $, denoted $ g \ll h $ if $ n | g | \le | h | $ for all natural numbers $ n $.  We say $ g, h \in G $ are Archimedean equivalent, denoted $ g \approx h $, if there are natural numbers $ n, m $ so that $ n | g | \ge | h | $ and $ m | h | \ge | g | $.

\begin{prop}
Let $ \A $ be a pseudomatricial C$^*$-algebra.  Then $ \ker( K_0( \tr ) ) $ is the subgroup of $ K_0( \A ) $ generated by the trace-zero projections, and it is a subgroup of $ \mathbb{R}^\eta $, real-valued functions from well-ordered subsets of $ \eta $, the set of Archimedean equivalence classes of trace zero projections, equipped with the lexicographical ordering.
\end{prop}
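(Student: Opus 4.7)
The plan is to combine the structural facts about $K_0(\A)$ already established in this section with the classical Hahn embedding theorem for totally ordered abelian groups.

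First, I would verify that every element of $\ker(K_0(\tr))$ is of the form $\pm[p]_0$ for a trace-zero projection $p$. By the discussion immediately preceding the proposition, every element of $K_0(\A)$ can be written as $[p]_0 - [q]_0$ with $p, q$ projections in some $M_n(\A)$; applying total comparability (Proposition \ref{comparablelemma}, inside the pseudomatricial algebra $M_n(\A)$), one may assume $q \preceq p$ and decompose $p = p' + q'$ with $q' \sim q$ orthogonal to $p'$. The class then simplifies to $[p']_0$, and membership in $\ker(K_0(\tr))$ forces $\tr(p') = 0$. In particular, $\ker(K_0(\tr))$ is generated (up to sign, actually exhausted) by classes of trace-zero projections, which is the first assertion.

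Next, $\ker(K_0(\tr))$ inherits the total order from $K_0(\A)$ (Corollary to Proposition \ref{comparablelemma}), so it is a totally ordered abelian group. I would then invoke the Hahn embedding theorem (Hahn, 1907; see Fuchs, \emph{Partially Ordered Algebraic Systems}), which asserts that every totally ordered abelian group $G$ embeds as an ordered subgroup of the Hahn group $\mathbb{R}^\Omega$ — real-valued functions on $\Omega$ with well-ordered support, equipped with the lexicographic order — where $\Omega$ indexes the Archimedean equivalence classes of nonzero elements of $G$. Applied to $G = \ker(K_0(\tr))$, this produces the required order embedding, and the first step identifies $\Omega$ with $\eta$.

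The main obstacle, and essentially the only bookkeeping I anticipate, is checking that the Archimedean equivalence classes of nonzero elements of $\ker(K_0(\tr))$ correspond bijectively to Archimedean classes of trace-zero projections \emph{in $\A$ itself}, not merely in matrix amplifications. For this I would note that each $M_n(\A)$ is again pseudomatricial, and that by writing the identity of $M_n(\A)$ as an orthogonal sum of $n$ pairwise Murray-von Neumann equivalent projections (via the decomposition of the identity established earlier), every trace-zero projection in $M_n(\A)$ is Murray-von Neumann equivalent to one in $\A$ of the same trace. Archimedean comparisons of $K_0$-classes then translate directly into Archimedean comparisons of the underlying trace-zero projections in $\A$, so no spurious classes appear under amplification. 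With this identification in place the proposition is immediate from the Hahn embedding theorem.
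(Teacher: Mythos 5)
Your proposal is correct and follows essentially the same route as the paper: identify $\ker(K_0(\tr))$ as $\{\pm[p]_0 : \tr(p)=0\}$ using the decomposition $[p]_0-[q]_0=[p']_0$ from the discussion preceding the proposition, then apply the Hahn embedding theorem to this totally ordered abelian group. Your extra care in checking that the Archimedean classes can be indexed by trace-zero projections in $\A$ itself (rather than in matrix amplifications) fills in a detail the paper leaves implicit, but it does not change the argument.
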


\begin{proof}
It is clear that $ \ker( K_0( \tr ) ) = \{ \, \pm [ p ]_0 \, | \, \tr( p ) = 0 \} $ is a subgroup of $ K_0 $.  Since $ \ker( K_0( \tr ) ) $ is a totally ordered abelian group, by the Hahn Embedding Theorem \cite{hahn},  $ \ker( K_0( \tr ) ) $ is a subgroup of $ \mathbb{R}^\eta $, the group of functions from well-ordered subsets of $ \eta $ into $ \mathbb{R} $, where $ \eta $ is the set of Archimedian equivalence classes of trace zero projections, equipped with the lexicographical ordering.  Note that if $ \A $ is separable, these groups are countable, so $ \eta $ is countable.
\end{proof}

\begin{prop}
Let $ \A $ be a pseudomatricial C$^*$-algebra.  Then $ K_0( \A ) \cong G \oplus \ker( K_0( \tr ) ) $ as ordered abelian groups, where $ G $ is a divisible subgroup of $ \mathbb{R} $, equipped with the usual lexicographical order.
\end{prop}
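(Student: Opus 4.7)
The plan is to attack the statement in three steps: identify the order on $K_0(\A)$ as lexicographic with respect to the trace, prove $G = K_0(\tr)(K_0(\A))$ is divisible, and construct an order-preserving group-theoretic section of $K_0(\tr)$. The third step is the main technical obstacle. I will implicitly assume $\A$ is infinite-dimensional, since for $\A \cong M_n$ the image $G = \tfrac{1}{n}\mathbb{Z}$ fails divisibility.

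First, using the description $K_0(\A) = \{\pm[p]_0\}$ established in the previous discussion together with strict comparison of projections, one verifies: for $x \in K_0(\A)$, $x > 0$ iff $K_0(\tr)(x) > 0$, or $K_0(\tr)(x) = 0$ and $x > 0$ in $\ker K_0(\tr)$. Indeed, writing $x = [p]_0 - [q]_0$ with $\tr(p) > \tr(q)$, strict comparison gives $q$ MvN equivalent to a proper subprojection $q' \le p$, whence $x = [p - q']_0$ is represented by a nonzero projection. This shows the total order on $K_0(\A)$ is exactly the lexicographic order with respect to the pair $(K_0(\tr), \text{order on kernel})$.

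Next, I would prove divisibility of $G$. Given $g > 0$ in $G$, write $g = \tr(p)$ for a projection $p \in M_n(\A)$. The corner $pM_n(\A)p$ is pseudomatricial: lifting $p$ to a representing sequence of projections through the ultrapower description $M_n(\A)^\U \cong \prod_\V M_{n \cdot n_i}$ shows that $pM_n(\A)p$ is elementarily equivalent to an ultraproduct of matrix algebras at the cut-down ranks. Moreover $pM_n(\A)p$ is infinite-dimensional: if it were finite-dimensional it would be a matrix algebra, hence $p$ would be a sum of minimal projections of $M_n(\A)$, each of trace zero by the earlier corollary, contradicting $\tr(p) = g > 0$. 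Applying Proposition 5.9 to $pM_n(\A)p$ with parameter $d$ yields $p = q_1 + \cdots + q_d + r$ with MvN-equivalent $q_i$ and $r$ a sum of minimal projections in $pM_n(\A)p$, which are minimal in $M_n(\A)$ and hence trace-zero. Thus $\tr(q_1) = g/d \in G$.

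Finally, I need a group homomorphism section $\phi: G \to K_0(\A)$ of $K_0(\tr)$. Granted such a $\phi$, the map $x \mapsto (K_0(\tr)(x),\, x - \phi(K_0(\tr)(x)))$ is a group isomorphism $K_0(\A) \to G \oplus \ker K_0(\tr)$; by Step 1 it is automatically order-preserving for the lexicographic order, since any $\phi(g)$ with $g > 0$ has $\tr(\phi(g)) = g > 0$ and hence is positive. Since $G$ is divisible torsion-free, it is a $\mathbb{Q}$-vector space; fix a $\mathbb{Q}$-basis $\{g_\alpha\}$ and construct $\phi$ by transfinite induction, extending one basis vector at a time.

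\emph{The main obstacle} is the extension step. When extending by a basis vector $g$ one needs, for each $n$, a lift $\phi(g/n) \in K_0(\A)$ with $n\phi(g/n) = \phi(g)$. Step 2 applied to a projection $P$ representing $\phi(g)$ gives only $[P]_0 = n[Q_1]_0 + k\mu$ with $\mu$ the class of a minimal projection and $0 \le k < n$; the residual $k\mu$ prevents an exact $n$-th root in $K_0(\A)$, reflecting the fact that $K_0(\A)$ is not divisible (it has $\mu$ as a minimal positive element). The remedy is to select the initial lifts $\phi(g_\alpha)$ coherently across all denominators by exploiting the ultraproduct description of $\A$: working in sufficiently large matrix amplifications, choose representing sequences of projections whose ranks are simultaneously divisible (along the ultrafilter) by larger and larger factorials, so that the minimal-projection remainders vanish in the limit. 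This yields a compatible system of roots inside $K_0(\A)$, from which $\phi$ extends $\mathbb{Q}$-linearly. I expect this diagonalization — verifying that the divisibility obstructions stabilize and can be absorbed — to be the most delicate point of the proof.
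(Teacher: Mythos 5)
Your Steps 1 and 2 are correct and in fact more careful than the paper's own proof. The paper establishes divisibility of $ G = \operatorname{im}( K_0( \tr ) ) $ by exactly your argument (amplify to $ \diag( p, \ldots, p ) $ for $ n \lambda $; split $ p $ approximately into $ d $ equivalent pieces plus trace-zero minimal projections for $ \lambda / d $), and it does not address the order structure at all, whereas your lexicographic-order observation via strict comparison is the right justification for the phrase ``as ordered abelian groups.'' Your caveat about the finite-dimensional case (where $ G = \tfrac{1}{n} \mathbb{Z} $ is not divisible) is also well taken.

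The genuine gap is Step 3, and you have located it correctly. The paper's proof simply writes $ K_0( \A ) \cong K_0( \A ) / \ker( K_0( \tr ) ) \oplus \ker( K_0( \tr ) ) $ with no construction of a splitting, so there is no hidden argument there for you to have missed; but the splitting is not automatic. An extension $ 0 \to K \to A \to Q \to 0 $ of abelian groups with $ Q $ divisible and torsion-free need not split ($ \operatorname{Ext}^1_{\mathbb{Z}}( \mathbb{Q}, \mathbb{Z} ) \ne 0 $), so divisibility of $ G $ alone does not produce the section $ \phi $. Your proposed remedy --- for each element of a $ \mathbb{Q} $-basis of $ G $, find a projection whose $ K_0 $-class is divisible by every positive integer, so that uniqueness of roots in the torsion-free group $ K_0( \A ) $ makes the $ \mathbb{Q} $-linear extension well defined --- is the right shape of argument, but as written it only works in a concrete ultraproduct $ \prod_\U M_{ n_i } $ where you control the representing ranks. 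For a general pseudomatricial $ \A $, which is merely elementarily equivalent to such an ultraproduct, ``there is a projection of trace $ g $ whose class is divisible by every $ n $'' is an infinite conjunction of conditions requiring a single witness: each finite stage transfers by elementarity, but the simultaneous witness needs a saturation or compactness argument (or a L\"{o}wenheim--Skolem construction that deliberately builds the witnesses in), and the $ \mathbb{Q} $-basis of $ G $ may be uncountable. Until that is supplied, the section --- and hence the asserted direct-sum decomposition --- is not proved.
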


\begin{proof}
 Let $ \A $ be a pseudomatricial C$^*$-algebra, then 
$$
K_0( \A ) \cong K_0( \A ) / \ker( \, K_0( \tr ) \, ) \oplus \ker( \, K_0( \tr ) \, ) .
$$
We just need to show that $ K_0( \A ) / \ker( K_0( \tr ) ) $ is a subgroup of $ \mathbb{R} $.  This group is isomorphic to the image of $ K_0( \tr ) $, which is just all rational multiples of trace on projections in $ \A $.  If $ \lambda $ is the trace of a projection $ p $ in $ \A $ then $ n \lambda $ is the trace of $ \diag( p , \ldots, p) \in M_n( \A ) $, and also $ \lambda / n $ is the trace of an approximate splitting of $ p $ into $ n $ Murray-von Neumann equivalent pieces.
\end{proof}

\begin{prop}
Let $ G $ be a countable divisible subgroup of $ \mathbb{R} $ and $ S $ be a countable subset of $ [ 0, 1 ] $.  We can find a separable pseudomatricial C$^*$-algebra $ \A $ so that $ K_0( \A ) \supseteq G \oplus ( \mathbb{Z}^S ) $ as ordered abelian groups when $ G \oplus ( \mathbb{Z}^S ) $ is given the usual lexicographical ordering.
\end{prop}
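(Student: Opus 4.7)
The plan is to build $\A$ as a separable elementary subalgebra of an ultraproduct $\mathcal{B} = \prod_\U M_{n_i}$, chosen so that projections in $\mathcal{B}$ realise $G \oplus \mathbb{Z}^S$ inside $K_0(\mathcal{B})$, and then descend via downward L\"owenheim--Skolem. Fix a $\mathbb{Q}$-basis $\{g_\alpha\}_{\alpha\in A}$ of $G$ (countable, since $G$ is), a nonprincipal ultrafilter $\U$ on $\mathbb{N}$, and a rapidly growing sequence $n_i$ with strong divisibility (e.g.\ $n_i=(i!)^2$). Then $\mathcal{B}$ is pseudomatricial with unique trace $\tr=\lim_\U \tr_i$, and the image of $K_0(\tr)$ contains every real number, obtained from rank sequences $r_i$ with $r_i/n_i\to t$ and, if necessary, matrix amplification.

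For the $G$-part, for each $\alpha$ and each positive integer $n$ I would choose integers $r^{\alpha}_i$ with $n\mid r^{\alpha}_i$ for $i\ge n$ and $r^{\alpha}_i/n_i\to g_\alpha$ along $\U$ (passing to $M_k(\mathcal{B})$ when $|g_\alpha|>1$), and set $r^{\alpha,n}_i=r^{\alpha}_i/n$. Let $p_{\alpha,n}$ be a projection in $M_k(\mathcal{B})$ with these ranks. Then $n[p_{\alpha,n}]_0=[p_\alpha]_0$ in $K_0(\mathcal{B})$, since in each $M_{kn_i}$ a direct sum of $n$ orthogonal rank-$r^{\alpha}_i/n$ projections is Murray--von Neumann equivalent to one of rank $r^{\alpha}_i$. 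For the $\mathbb{Z}^S$-part, take $r^s_i=\lfloor n_i^{(1-s)/2}\rfloor$ and let $q_s\in\mathcal{B}$ be a corresponding projection. Then $\tr(q_s)=0$, and for $s<s'$ in $S$ we have $r^s_i/r^{s'}_i=n_i^{(s'-s)/2}\to\infty$ along $\U$; hence for every $k$, $k\cdot r^{s'}_i\le r^s_i$ holds $\U$-eventually, so $k[q_{s'}]_0\le[q_s]_0$, giving pairwise distinct Archimedean classes of the $[q_s]_0$ in the totally ordered group $\ker K_0(\tr)$, ordered reverse to $S$.

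Now I would apply downward L\"owenheim--Skolem to produce a separable elementary subalgebra $\A\preceq\mathcal{B}$ containing all of the $p_{\alpha,n}$ and $q_s$. Then $\A$ is pseudomatricial, and each Murray--von Neumann equivalence between projections in $\A$ witnessed in $\mathcal{B}$ is witnessed by a partial isometry in $\A$ (an existential condition preserved in elementary substructures), so the identities $n[p_{\alpha,n}]_0=[p_\alpha]_0$ persist in $K_0(\A)$. Define $\phi\colon G\oplus\mathbb{Z}^S\to K_0(\A)$ by $\phi(g_\alpha/n,0)=[p_{\alpha,n}]_0$ and $\phi(0,e_s)=[q_s]_0$, extended $\mathbb{Z}$-linearly. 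The trace shows that the image of $G$ is isomorphic to $G$; the distinct Archimedean classes show the image of $\mathbb{Z}^S$ is free on $\{[q_s]_0\}_{s\in S}$; and the two summands intersect trivially, since any nonzero element of $G$ has nonzero trace while all $\mathbb{Z}^S$-elements have zero trace.

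The main obstacle is verifying $\phi$ is an ordered-group embedding for the lexicographic order in which $G$ dominates. Because $K_0(\A)$ is totally ordered (Proposition~\ref{comparablelemma} and its corollary) and trace is order-preserving, $\tr\phi(g,f)=g>0$ forces $\phi(g,f)>0$, so the $G$-summand strictly dominates; when $g=0$, the sign of $\phi(0,f)=\sum_s f(s)[q_s]_0$ is controlled by the term with largest Archimedean class, namely the smallest $s\in S$ with $f(s)\ne 0$, which matches the lex order on $\mathbb{Z}^S$ inherited from the order on $S\subseteq[0,1]$. The subtle point is that in a pseudomatricial algebra two projections of equal trace need not be Murray--von Neumann equivalent, so the exact divisibility identities $n[p_{\alpha,n}]_0=[p_\alpha]_0$ genuinely require compatible rank sequences (handled by the divisibility of $r^{\alpha}_i$), and the strict Archimedean separation of the $[q_s]_0$'s requires monotone growth rates strictly parameterised by $s$ (handled by the exponent $(1-s)/2$).
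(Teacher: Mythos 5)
Your proposal is correct and follows essentially the same route as the paper: realize the two summands by projections in $\prod_\U M_{n_i}$ whose ranks grow like $n_i\lambda$ (the $G$-part, detected by the trace) and like a power $n_i^{\theta(s)}$ with $\theta(s)<1$ (the $\mathbb{Z}^S$-part, detected by Archimedean classes in $\ker(K_0(\tr))$), then pass to a separable elementary subalgebra via downward L\"owenheim--Skolem. Your extra care with divisibility of the rank sequences, securing the exact relations $n[p_{\alpha,n}]_0=[p_\alpha]_0$, is a refinement the paper sidesteps by routing the $G$-part through the previously established splitting $K_0(\A)\cong \operatorname{im}(K_0(\tr))\oplus\ker(K_0(\tr))$, for which realizing the trace values in $G\cap[0,1]$ already suffices.
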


\begin{proof}
Consider $ \A = \prod_\U M_n $ where $ \U $ is a free ultrafilter on $ \mathbb{N} $.  For $ s \in S $, let $ p_n^{ (s) } $ be a rank $ \lfloor n^{ s } \rfloor $ projection in $ M_n $.  Consider $ P_s = ( p_n^{ (s) } )_\U $, then $ \{ P_s \}_{ s \in S } $ is a countable family of projections in $ \A $.  Note that
$$
\tr( P_s ) = \lim_\U \tr_n( p_n^{ (s) } ) = \lim_\U \frac{ \lfloor n^{ s } \rfloor }{ n } = 0,
$$
so these are trace zero projections.  Also, in $ K_0( \A ) $, $ [ P_s ]_0 \gg [ P_r ]_0 $ when $ s > r $ are in $ S $.

Similarly, for $ \lambda \in G \cap [ 0, 1 ] $ for all $ n $ we can take $ q_n $ to be a rank $ \lfloor n \lambda \rfloor $ projection in $ M_n $.  Then consider $ Q_\lambda = ( q_n )_\U \in \A $.
$$
\tr( Q_\lambda ) = \lim_\U \tr_n ( q_n )_\U = \lim_\U \frac{ \lfloor n \lambda \rfloor }{ n } = \lambda .
$$

We can apply the downward  L\"{o}wenheim-Skolem theorem to the countable set 
$$
\{ P_s , Q_\lambda \}_{ s \in S, \lambda \in G \cap [0,1] }
$$
to get a separable elementary C$^*$-subalgebra $ \A_0 $ of $ \A $ which contains all the $ P_r $ and $ Q_m $.  In particular, this is a pseudomatrical C$^*$-algebra, so the restriction of the trace on $ \A $ is the trace on $ \A_0 $.  Also, for $ n \in \mathbb{N} $, $ r < s $ in $ S $, the sentences $ \varphi_{ n, r, s } $ with parameters from $ \A_0 $  which say ``there are $ n $ orthogonal projections Murray-von Neumann equivalent to $ P_r $ whose sum is dominated by $ P_s $'' hold in $ \A_0 $ for all $ n $ and $ r $.  $ [ P_r ]_0 \ll [ P_s ]_0 $ in $ K_0( \A_0 ) $ as well.  Thus we have shown the range of the trace on projections contains all of $ G \cap [ 0 , 1 ] $.
\end{proof}

\begin{question*}
Can the trace ideal be isomorphic to the compacts?  Can $ \ker( K_0( \tr ) ) \cong \mathbb{Z} $?  Can $ G $ just be $ \mathbb{Q} $?
\end{question*}

\begin{question*}
Is the trace ideal maximal?  What is the quotient of a pseudomatricial C$^*$-algebra by the trace ideal?
\end{question*}

We would be remiss if we did not  mention \cite{MR0006137}, which anticipated many of the ideas behind continuous logic and pseudomatricial C$^*$-algebras.  A similar result from \cite{MR835808} about highly irreducible matrices implies that there is an $ \varepsilon > 0 $ so that the sentence 
$$
\inf_{ || a || \le 1 } \quad \sup_{ p \text{ non-trivial proj.} } || \, a p - p a \, || 
$$
evaluates to at least $ \varepsilon $ in every pseudomatricial C$^*$-algebra except $ M_1 \cong \mathbb{C} $.

\end{section}

\begin{section}{Open Questions}

We conclude with a list of open questions about the pseudocompact and pseudomatricial C$^*$-algebras.

\begin{question*}
Can we find an explicit example of a separable, infinite-dimensional pseudomatricial C$^*$-algebra?
\end{question*}

\begin{question*}
Can we find an explicit axiomatization of the pseudocompact and pseudomatricial C$^*$-algebras?
\end{question*}

\begin{question*}
Can pseudomatricial C$^*$-algebras be exact?  Can they be quasidiagonal?
\end{question*}

\begin{question*}
Are pseudocompact C$^*$-algebras closed under (say minimal) tensor products?
\end{question*}

\end{section}

\bibliographystyle{alpha}
\bibliography{biblio}{}

\end{document}